%% 
%% Copyright 2007-2024 Elsevier Ltd
%% 
%% This file is part of the 'Elsarticle Bundle'.
%% ---------------------------------------------
%% 
%% It may be distributed under the conditions of the LaTeX Project Public
%% License, either version 1.3 of this license or (at your option) any
%% later version.  The latest version of this license is in
%%    http://www.latex-project.org/lppl.txt
%% and version 1.3 or later is part of all distributions of LaTeX
%% version 1999/12/01 or later.
%% 
%% The list of all files belonging to the 'Elsarticle Bundle' is
%% given in the file `manifest.txt'.
%% 
%% Template article for Elsevier's document class `elsarticle'
%% with harvard style bibliographic references

\documentclass[preprint,12pt]{elsarticle}

%% Use the option review to obtain double line spacing
%% \documentclass[authoryear,preprint,review,12pt]{elsarticle}

%% Use the options 1p,twocolumn; 3p; 3p,twocolumn; 5p; or 5p,twocolumn
%% for a journal layout:
%% \documentclass[final,1p,times,authoryear]{elsarticle}
%% \documentclass[final,1p,times,twocolumn,authoryear]{elsarticle}
%% \documentclass[final,3p,times,authoryear]{elsarticle}
%% \documentclass[final,3p,times,twocolumn,authoryear]{elsarticle}
%% \documentclass[final,5p,times,authoryear]{elsarticle}
%% \documentclass[final,5p,times,twocolumn,authoryear]{elsarticle}

%% For including figures, graphicx.sty has been loaded in
%% elsarticle.cls. If you prefer to use the old commands
%% please give \usepackage{epsfig}

%% The amssymb package provides various useful mathematical symbols
\usepackage{amssymb}
%% The amsmath package provides various useful equation environments.
\usepackage{amsmath}
\usepackage{multirow}%
\usepackage{algorithm}%
\usepackage{algorithmicx}%
\usepackage{algpseudocode}%
\usepackage{booktabs}%
\usepackage[linkcolor=blue,colorlinks=true]{hyperref}
%% The amsthm package provides extended theorem environments
%% \usepackage{amsthm}

\newtheorem{thm}{Theorem}
\newtheorem{theorem}[thm]{Theorem}

\newtheorem{proposition}[thm]{Proposition}
\newdefinition{rmk}{Remark}
\newdefinition{example}[rmk]{Example}
\newdefinition{definition}{Definition}
\newproof{proof}{Proof}
\newproof{pot}{Proof of Theorem \ref{thm2}}

%% The lineno packages adds line numbers. Start line numbering with
%% \begin{linenumbers}, end it with \end{linenumbers}. Or switch it on
%% for the whole article with \linenumbers.
%% \usepackage{lineno}

\begin{document}

\begin{frontmatter}

%% Title, authors and addresses

%% use the tnoteref command within \title for footnotes;
%% use the tnotetext command for theassociated footnote;
%% use the fnref command within \author or \affiliation for footnotes;
%% use the fntext command for theassociated footnote;
%% use the corref command within \author for corresponding author footnotes;
%% use the cortext command for theassociated footnote;
%% use the ead command for the email address,
%% and the form \ead[url] for the home page:
%% \title{Title\tnoteref{label1}}
%% \tnotetext[label1]{}
%% \author{Name\corref{cor1}\fnref{label2}}
%% \ead{email address}
%% \ead[url]{home page}
%% \fntext[label2]{}
%% \cortext[cor1]{}
%% \affiliation{organization={},
%%            addressline={}, 
%%            city={},
%%            postcode={}, 
%%            state={},
%%            country={}}
%% \fntext[label3]{}

%\title{} %% Article title

%% use optional labels to link authors explicitly to addresses:
%% \author[label1,label2]{}
%% \affiliation[label1]{organization={},
%%             addressline={},
%%             city={},
%%             postcode={},
%%             state={},
%%             country={}}
%%
%% \affiliation[label2]{organization={},
%%             addressline={},
%%             city={},
%%             postcode={},
%%             state={},
%%             country={}}

%\author{} %% Author name

%% Author affiliation
%\affiliation{organization={},%Department and Organization
            %addressline={}, 
            %city={},
            %postcode={}, 
            %state={},
            %country={}}
\title{Approximation and exact penalization in simple bilevel variational problems}

\author[1]{Giancarlo Bigi}
\ead{giancarlo.bigi@unipi.it}
%\email{giancarlo.bigi@unipi.it}
%\equalcont{This author contributed equally to this work.}

\author[2]{Riccardo Tomassini\corref{cor1}}
\ead{riccardo.tomassini@uniroma1.it}
\cortext[cor1]{Corresponding author}
%\email{riccardo.tomassini@diag.uniroma1.it}

\affiliation[1]{organization={Department of Computer Science, University of Pisa},
city={Pisa},
country={Italy}}

\affiliation[2]{organization={Department of Computer, Control and Management Engineering, Sapienza University of Rome},
city={Rome},
country={Italy}}

% %% Abstract
\begin{abstract}
%% Text of abstract
A simple bilevel variational problem, where the lower level is a variational inequality while the upper level is an optimization problem, is studied. We consider an inexact version of the lower problem, which guarantees enough regularity to allow the exploitation of techniques of exact penalization. Moreover, cutting planes are used to approximate the Minty gap function of the lower level. Algorithms to solve the resulting inexact bilevel problem are devised relying on these techniques and approximations. Finally, their convergence is studied in detail by analyzing the effect of the given inexactness.
\end{abstract}

%%Graphical abstract
%\begin{graphicalabstract}
%\includegraphics{grabs}
%\end{graphicalabstract}

%%Research highlights
%\begin{highlights}
%\item Research highlight 1
%\item Research highlight 2
%\end{highlights}

%% Keywords
\begin{keyword}
Bilevel problem \sep Exact penalization  \sep Cutting planes \sep Variational inequalities
%% keywords here, in the form: keyword \sep keyword

%% PACS codes here, in the form: \PACS code \sep code

%% MSC codes here, in the form: \MSC code \sep code
%% or \MSC[2008] code \sep code (2000 is the default)

\end{keyword}

\end{frontmatter}

%% Add \usepackage{lineno} before \begin{document} and uncomment 
%% following line to enable line numbers
%% \linenumbers

%% main text
%%

%% Use \section commands to start a section
\section{Introduction}\label{sec1}

Variational problems play a fundamental role in optimization due to their extensive applications  (see, e.g. \cite{FePA97}), while bilevel problems have gained increasing attention in recent years (see, e.g. \cite{DeZe20}), since they are fit to model hierarchical decision making processes. There are many  possible formulations,  depending on the nature of the  upper and lower levels. In this paper, we focus on  bilevel problems where the lower level is non-parametric with respect to the upper level.
In particular, we study the following simple bilevel variational problem:
\begin{align}
&\min \{f(x) \mid x\in C, \ \langle G(x),y-x\rangle \ge 0, \ \forall y\in C\},\tag{OPVI} \label{OPVI}
\end{align}
where $C \subset \mathbb{R}^n$ is convex and compact, $f:C \to \mathbb{R}$ is convex, and $G:C \to\mathbb{R}^n$ is monotone and continuous.

Hierarchical optimization problems \cite{Ca05,So07,BeSa14,SaSh17,AnYo19,ShVuZe21,JiAbMoHa23,Giang-Tran2024} and, more recently, nested variational inequalities \cite{AnAn19,Lampariello2020,TAN2022106160,Lampariello2022,HiAn22,SaYu25} are some of the most studied problems in this area, often drawing inspiration from each other. Tikhonov-like approaches are a main tool for tackling these problems, and they arise from suitable penalization schemes for the bilevel problem. These techniques have been extensively studied for both optimization problems and variational inequalities.
Convergence results have been established for a variety of iterative methods, that leverage the first order optimality conditions of the penalized problem. For example, proximal point methods \cite{Ca05}, projected gradient methods \cite{So07,AnYo19}, and  conditional gradient methods \cite{Giang-Tran2024,JiAbMoHa23} have been used to find optimal solutions of bilevel optimization problems as the penalty parameter goes to infinity.
 Regularization methods \cite{Lampariello2020,Lampariello2022} have been successfully employed in  nested variational inequalities where the lower level is monotone, while extragradient methods \cite{TAN2022106160,AnAn19,HiAn22,SaYu25}, first introduced in \cite{Korpelevich1976TheEM} for single level variational inequalities,  have been used in the pseudo-monotone case. Other strategies include viscosity methods \cite{SaSh17,ShVuZe21}, inspired by \cite{XU2004279}, which reformulate the bilevel problem as a fixed point problem with variational constraints, and half-space optimization methods \cite{BeSa14}, where solutions are iteratively refined within dynamically updated half-spaces.
 
 Ad hoc methods for \eqref{OPVI} have been developed more recently under mild assumptions, in particular,   Tikhonov-like penalty approaches have been successfully adapted in \cite{KaFa21,KaSaYo23}. Furthermore, methods for simple bilevel optimization and nested variational inequalities can be employed as well, but only  when adequate assumptions allow for a reformulation of the problem. For instance, when $G$ is affine, the lower level  of \eqref{OPVI} can be reformulated as a convex optimization problem through gap functions, thereby enabling the use of techniques for simple bilevel optimization (see, e.g., \cite{So07,BeSa14,SaSh17}). In this case, exact penalization techniques have been exploited \cite{Bigi2022}  to prevent  the penalty parameter from going to infinity, which could lead to numerical issues.  
 This paper aims at going beyond the affine case by combining exact penalization with suitable cutting plane techniques for variational inequalities (see, e.g., \cite{Zangwill1971,Nguyen84,bigi2010successive}). Specifically, we introduce inexactness  into the lower level  and reformulate the bilevel problem using gap functions, obtaining a constrained single level problem. Inexactness allows bringing in regularity in the form of the Mangasarian-Fromovitz constraint qualification, so that exact penalization techniques can be exploited \cite{Ferris1991}. An analysis on  how inexactness propagates through different reformulations  of variational problems is carried out in \cite{Bigi2023}, while \cite{BeSte24} discusses how the inexactness can be leveraged to gain regularity in semi-infinite programming.  In our framework, inexactness comes at the cost of losing convexity in the lower level. Actually, this doesn't happen when the operator $G$ is affine and monotone since the gap function is convex. On the contrary, the convexity of  the gap function is not guaranteed when the operator is not affine. Therefore, we resort to the Minty gap function, which is  always convex but very challenging to evaluate, as the computation of any of its values requires the resolution of a non-convex minimization problem. To overcome this difficulty, we approximate the Minty gap function by using  cutting plane techniques. We prove that the solutions of the  penalized problem  coincide with the  solutions of the inexact bilevel problem for sufficiently large penalty parameters for any given degree of approximation. Moreover, we show that the required penalty parameter, while dependent on the choice of the cutting planes, remains uniformly bounded within the whole class of approximations. This allows devising a method that relies on exact penalization alongside of successive approximations of the Minty gap function without the penalty parameter to blow up.

The paper is structured as follows. Section 2.1 provides a mathematical introduction to \eqref{OPVI} and single level reformulations through gap functions, while Section 2.2 focuses on inexactness in the bilevel problem and the approximation of the Minty gap function in the lower level. Section 3 analyzes the class of penalized problems that arises from the approximations of the Minty gap function. In particular,  the exact penalty parameters are shown to be uniformly bounded within the entire class. In Section 4 we build algorithms and {analyze} their convergence, obtaining  theoretical bounds for the propagation of the inexactness. Finally, Section 5 reports some preliminary numerical tests 
in order to evaluate the sensitivity of the problem to the inexactness. 
In particular, the tests show that the actual final inexactness is generally meaningfully lower than the theoretical one. Some other tests have been run for an equilibrium selection problem, wherein the performances of the method are compared with other recent algorithms from literature.
\section{Bilevel variational problem}\label{sec2}

  \textcolor{black}{Throughout all the paper,} \textcolor{black}{let} the  following set of assumptions
\begin{itemize}

\item[-] \textbf{(A1)} the set $C \subset \mathbb{R}^n$ is convex and compact,
\item[-] \textbf{(A2)} the function $f:\mathbb{R}^n \rightarrow \mathbb{R}$ is convex,
\item[-] \textbf{(A3)} the map $G:\mathbb{R}^n  \rightarrow \mathbb{R}^n$ is continuous and monotone,
\end{itemize}
\textcolor{black}{hold, then} the bilevel problem \eqref{OPVI} has at least one optimal solution. The next subsection recalls some basic properties of variational inequalities and shows how \eqref{OPVI} can be reformulated as a constrained optimization problem using gap functions. 
 \subsection{General framework and mathematical tools}\label{subsec1}
We define the Stampacchia variational inequality 
\begin{align} \label{VI} \tag{VI}
    \text{Find} \quad x^* \in C: \langle G(x^*),y-x^*\rangle \ge 0 \quad \forall y\in C,  
\end{align}
which is closely related to the Minty variational inequality:
\begin{align} \label{Minty} \tag{MVI}
    \text{Find} \quad x^* \in C: \langle G(y),y-x^*\rangle \ge 0 \quad \forall y\in C.
\end{align}
 Under our assumptions, these two problems have the same set of solutions (see, e.g. \cite[Theorem 2.3.5]{FacchineiPang}). \textcolor{black}{ Therefore, \eqref{OPVI}
 can be recast also in another semi-infinite  format, i.e.
 \begin{align}
&\min \{f(x) \mid x\in C, \ \langle G(y),y-x\rangle \ge 0, \ \forall y\in C\}.\label{OPMVI}
\tag{OPMVI}
\end{align}}
\textcolor{black}{ Moreover, both the above variational inequalities}
can be reformulated as minimization problems through  gap functions, 
respectively  the Stampacchia gap function 
    \begin{align*}
        \psi^S(x):=\sup\{\langle G(x),x-y \rangle|y\in C\}
    \end{align*}
    %for  \textcolor{black}{ \eqref{VI}},
    for \eqref{VI},
    and the Minty gap function 
    \begin{align*}
        \psi^M(x):=\sup\{\langle G(y),x-y \rangle|y\in C\}
    \end{align*}
    %for  \textcolor{black}{ \eqref{Minty}}.
    for \eqref{Minty}.
 The Minty gap function  is always convex since it is  the pointwise supremum of a class of convex functions, while the Stampacchia gap function is not necessarily  convex when G is not affine. The following proposition summarizes  the aforementioned properties.
\begin{proposition}\label{reformulation} Assume \textbf{(A1)} and \textbf{(A3)} hold true, then  the following  are equivalent:
\begin{enumerate}
    \item $x^*\in C$ is a solution to \eqref{VI},
    \item $x^*\in C$ is a solution to \eqref{Minty},
    \item $x^* \in \arg \min \{\psi^S(x)\ | \ x\in C \}$ and $\psi^S(x^*)=0$,
    \item $x^* \in \arg \min \{\psi^M(x)\ | \ x\in C \}$ and $\psi^M(x^*)=0$.
\end{enumerate}
    
\end{proposition}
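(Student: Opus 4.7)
The plan is to establish the equivalences via the cycle $(1)\Leftrightarrow(3)$, $(2)\Leftrightarrow(4)$, and $(1)\Leftrightarrow(2)$. The first two are essentially unpacking of the definitions of the gap functions, while the last is Minty's classical lemma, which the excerpt already cites from Facchinei--Pang.

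First, I would observe that both gap functions are nonnegative on $C$: taking $y=x$ in the defining suprema yields $\psi^S(x)\ge\langle G(x),x-x\rangle=0$ and $\psi^M(x)\ge\langle G(x),x-x\rangle=0$ for every $x\in C$. This turns the minimization characterizations in (3) and (4) into the equivalent conditions $\psi^S(x^*)=0$ and $\psi^M(x^*)=0$, respectively, dropping the explicit mention of the $\arg\min$.

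Next, $(1)\Leftrightarrow(3)$ follows because $\psi^S(x^*)\le 0$ is precisely the statement $\langle G(x^*),x^*-y\rangle\le 0$ for all $y\in C$, i.e., (VI). Combined with $\psi^S(x^*)\ge 0$, this is exactly $\psi^S(x^*)=0$, which together with nonnegativity on $C$ says $x^*$ is a minimizer attaining value zero. The same argument, with $G(y)$ in place of $G(x^*)$, gives $(2)\Leftrightarrow(4)$.

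Finally, $(1)\Leftrightarrow(2)$ is Minty's lemma under (A1) and (A3): monotonicity of $G$ yields the implication $(1)\Rightarrow(2)$ by writing $\langle G(y),y-x^*\rangle=\langle G(y)-G(x^*),y-x^*\rangle+\langle G(x^*),y-x^*\rangle\ge 0$, while the converse direction uses the standard line-segment trick $y_t=(1-t)x^*+ty$ with $t\downarrow 0$ together with continuity of $G$ to pass to the limit in $\langle G(y_t),y_t-x^*\rangle\ge 0$. Since this is the content of \cite[Theorem 2.3.5]{FacchineiPang}, I would simply cite it. The only mildly delicate point is making sure the assumptions (A1) and (A3) are strong enough for the cited result; no compactness is needed for Minty's lemma itself, so (A3) alone suffices for this step.
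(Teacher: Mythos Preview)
Your proposal is correct and matches the paper's treatment: the paper does not give a self-contained proof of this proposition but presents it as a summary of standard facts, citing \cite[Theorem 2.3.5]{FacchineiPang} for the equivalence $(1)\Leftrightarrow(2)$ exactly as you do. The remaining equivalences $(1)\Leftrightarrow(3)$ and $(2)\Leftrightarrow(4)$ are, as you note, immediate from the definitions of the gap functions together with their nonnegativity on $C$, which is precisely what the paper takes for granted.
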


Therefore,
we can recast \eqref{OPVI} as the constrained optimization problem
\begin{align*}
&\min \{f(x)\ | \ x\in C,  \  \psi^S(x)\le 0\},
\end{align*}
or equivalently
\begin{align}\label{constrained}
&\min \{f(x)\ | \ x\in C,  \  \psi^M(x)\le 0\},
\end{align}
 since the gap functions have the same set of minimum points. 
 Lagrange multipliers are \textcolor{black}{a} useful tool for tackling  constrained problems, but their existence requires  adequate regularity conditions. \textcolor{black}{  In particular,  we say that a feasible point $x\in C$ satisfies the  Mangasarian-Fromovitz constraint qualification (MFCQ) for problem \eqref{constrained} if either $\psi^M(x)<0$ or  $\psi^M(x) = 0$ and 
\begin{align*}
    0\not \in \partial \psi^M(x)+N_C(x),
\end{align*}
where  $N_C(x)$ denotes the normal cone of $C$ at $x$ and  $\partial f(x)$  the subdifferential of $f$ at $x$. Notice that the above  Mangasarian-Fromovitz constraint qualification is equivalent to the condition for the semi-infinite   problem \eqref{OPMVI} introduced in \cite{JoTwWe92} (and further discussed in \cite{He92}) as the extended Mangasarian-Fromovitz constraint qualification.
Unfortunately, (MFCQ) doesn't hold for \eqref{constrained} at any feasible point. In fact, since   the feasible region coincides with the   set of solutions to \eqref{Minty}, every feasible point  satisfies both $\psi^M(x)=0$ and the  first order necessary optimality conditions of problem
$$\min \{\psi^M(x)\ | \ x\in C \},$$ in contradiction with the  constraint qualification.}
 To address this lack of regularity, we introduce inexactness into the lower level of \eqref{constrained}, which ensures that  (MFCQ) holds \textcolor{black}{at every feasible point} and consequently enables the use of exact penalization techniques.
 \subsection{Approximating the lower level} First of all, we discuss how  the set of solutions of the variational inequalities behaves when a positive inexactness  is added.  In particular, given $\varepsilon>0$
  consider the following inexact variational inequalities:
\begin{align}\label{eVI}\tag{$\varepsilon$-VI}
    \text{Find} \quad x^* \in C: \langle G(x^*),y-x^*\rangle \ge -\varepsilon \quad \forall y\in C
\end{align}
and
\begin{align}\label{eMVI}\tag{$\varepsilon$-MVI}
   \text{Find} \quad x^* \in C: \langle G(y),y-x^*\rangle \ge -\varepsilon \quad \forall y\in C.
\end{align}
While
   $\varepsilon=0$ entails that the sets of solutions coincide, 
this does not  generally hold true for $\varepsilon>0$. In fact, 
in contrast to Proposition \ref{reformulation},  the set of solutions  may be  different when $\varepsilon>0$ and just the following  inclusion always holds  true:
\begin{align}\label{contains}
\{x\in C\ |\ \psi^S(x)\le \varepsilon\} \subseteq \{x \in C\ | \ \psi^M(x)\le \varepsilon \}.
\end{align}
While the set of solutions of \eqref{eMVI} is always convex, the convexity of the set of solutions of \eqref{eVI} is not guaranteed beyond  the affine case, as shown in the next example. 
\begin{example}Consider the set $C=[0,1]^2$ and the operator $G(x_1,x_2)=(x_1^2 + x_2,x_2^2 - x_1)$ that is monotone on $C$.  The set of solutions to \eqref{eVI}, that is $\{x\in C\ | \ \psi^S(x)\le \varepsilon\}$,  is not convex for $\varepsilon={65}/{4096}$. For instance the points  $(0,0)$ and $({1}/{16},{1}/{4})$ are in the set, but their convex combination $({1}/{32},{1}/{8})$ is not.
 More generally, consider the   operator $G_{a,b}(x_1,x_2)=(ax_1^2+x_2,bx_2^2-x_1)$ that is  monotone on $C$ for all $a \ge 0$ and $b\ge0$. It can be shown that,  given any pair $a>0 ,b>0$, there exists $\varepsilon_{a,b}\le 1$ such that the set of solutions of \eqref{eVI} is not convex for all $ \varepsilon \in (0,\varepsilon_{a,b})$. In Figure \ref{fig1} some cases are illustrated: the set of solutions of \eqref{eVI}(light green) is not convex and it is contained in the set of  solutions of \eqref{eMVI}(dark green), which is always convex.
\end{example}
\begin{figure}[h!]
    \centering
    \includegraphics[width=0.8\linewidth]{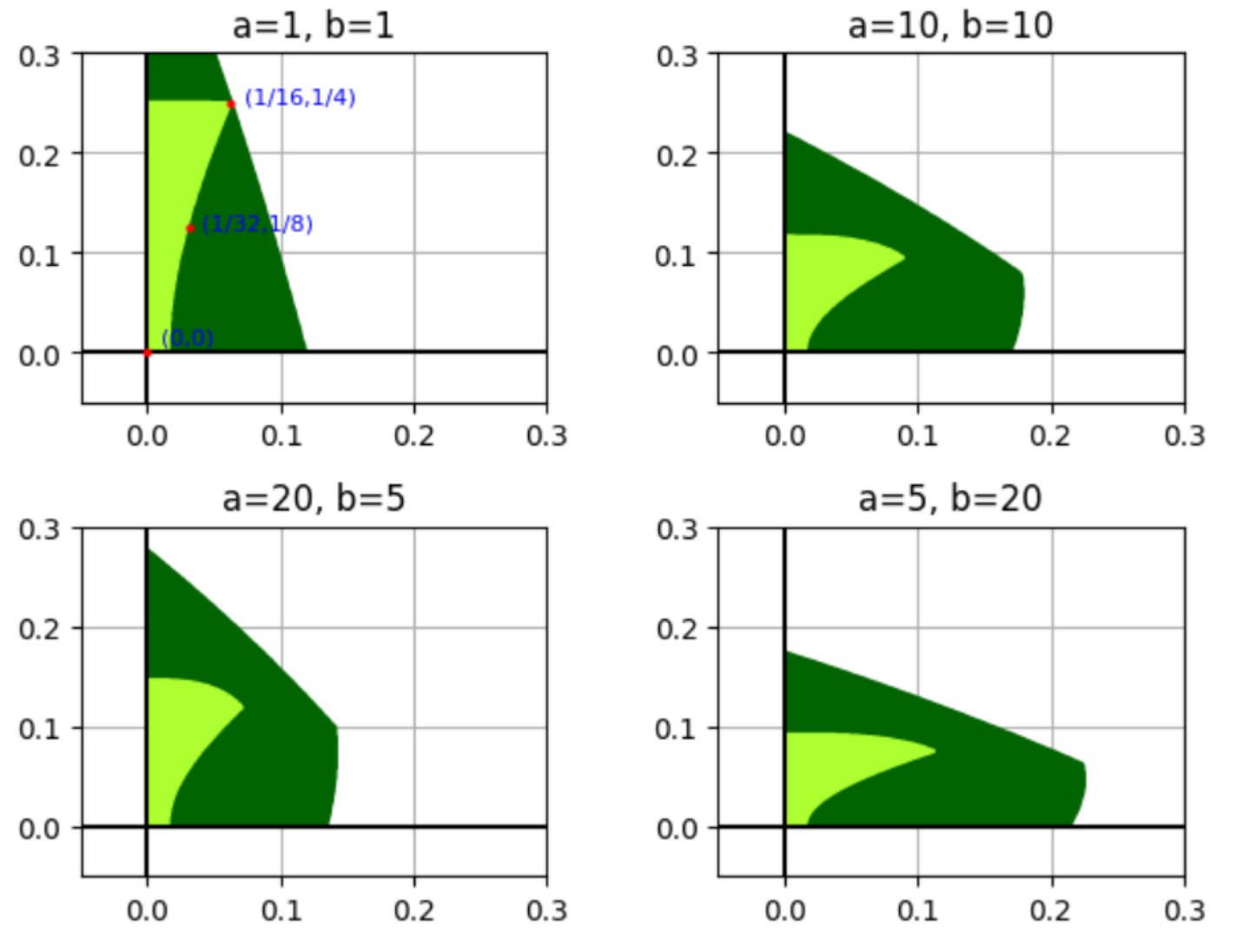}
    %\caption{\textcolor{black}{$\varepsilon$-solutions for the operator $G_{a,b}$ on the set $[0,1]^2$ with $\varepsilon=0.01$}}
    \caption{$\varepsilon$-solutions for different $a$ and $b$ for the operator $G_{a,b}$ on the set $[0,1]^2$ }
    \label{fig1}
\end{figure}

Now, consider the Stampacchia inexact version  of \eqref{OPVI}:
\begin{align}
&\min \{f(x)\ | \ x\in C,  \  \psi^S(x)\le \varepsilon \}.\tag{$\varepsilon$-OPVI} \label{OPVIe}
\end{align}
When $G$ is affine, exact penalization techniques have been employed \cite{Bigi2022} by leveraging the regularity gained as a consequence  of the  inexactness. 
 To extend this type of  approach beyond the affine  case, we  can not  rely on  the convexity for the Stampacchia gap function any more. Since the Minty gap function   is always  convex and the set of solutions of \eqref{eMVI} always contains  the solutions of \eqref{eVI}, we  focus on the inexact Minty bilevel problem:
\begin{align}
&\min \{f(x)\ | \ x\in C,  \  \psi^M(x)\le \varepsilon \}\tag{$\varepsilon$-OPMVI}. \label{OPMYe}
\end{align}
The optimal solutions of the above problem  behave well in respect to the original problem, meaning that  they \textcolor{black}{converge} to the optimal solutions of  \eqref{OPVI} as $\varepsilon$ goes to zero. 
\begin{proposition}
   Assume \textbf{(A1)-(A3)} hold true and consider a sequence $\{x^*_{\varepsilon_k} \}_{k}$ with $\varepsilon_k \downarrow0$ where $x^*_{\varepsilon_k}$ is an optimal solution to ($\varepsilon_k\text{-OPMVI}$). Then, every cluster point $\bar{x}$ of the sequence is an optimal solution to \eqref{OPVI}.
\end{proposition}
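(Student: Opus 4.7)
The plan is a standard convergence argument for perturbed feasible regions, using the continuity of $f$ and $\psi^M$ together with compactness of $C$.

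First, I would observe that every $x^*_{\varepsilon_k}$ lies in $C$, and since $C$ is compact by (A1), any cluster point $\bar{x}$ of the sequence automatically belongs to $C$. Let me fix a subsequence $\{x^*_{\varepsilon_{k_j}}\}_j$ converging to $\bar{x}$. The next step is to check feasibility of $\bar{x}$ for \eqref{OPVI}. Since $G$ is continuous (A3) and $C$ is compact, the Minty gap function $\psi^M(x)=\sup_{y\in C}\langle G(y),x-y\rangle$ is the supremum of a family of continuous affine functions indexed by a compact set, hence continuous on $\mathbb{R}^n$. Passing to the limit in $\psi^M(x^*_{\varepsilon_{k_j}})\le \varepsilon_{k_j}$ and using $\varepsilon_{k_j}\downarrow 0$ yields $\psi^M(\bar{x})\le 0$, so by Proposition \ref{reformulation} the point $\bar{x}$ is feasible for \eqref{OPVI}.

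For optimality, I would compare with any fixed optimal solution $x^*$ of \eqref{OPVI}, whose existence is guaranteed under (A1)--(A3). By Proposition \ref{reformulation} one has $\psi^M(x^*)=0\le \varepsilon_k$ for every $k$, so $x^*$ is feasible for each ($\varepsilon_k$-OPMVI). Optimality of $x^*_{\varepsilon_k}$ in the $\varepsilon_k$-problem then gives $f(x^*_{\varepsilon_k})\le f(x^*)$ for all $k$. Since $f$ is convex on $\mathbb{R}^n$ by (A2), it is continuous, and passing to the limit along the convergent subsequence we obtain $f(\bar{x})=\lim_j f(x^*_{\varepsilon_{k_j}})\le f(x^*)$. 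Combined with the feasibility of $\bar{x}$ for \eqref{OPVI}, this shows that $\bar{x}$ attains the optimal value and is therefore an optimal solution.

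There is no real obstacle in this argument; the only point that deserves a brief justification is the continuity of $\psi^M$, which follows from the standard fact that the marginal function of a jointly continuous function over a compact parameter set is continuous (or, equivalently, from the fact that $\psi^M$ is a finite-valued convex function on $\mathbb{R}^n$). Everything else is a straightforward interplay of compactness, continuity, and the perturbed-constraint relation $\{\psi^M\le 0\}\subseteq\{\psi^M\le\varepsilon_k\}$.
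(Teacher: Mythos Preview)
Your proof is correct and follows essentially the same route as the paper: continuity of $\psi^M$ yields feasibility of $\bar{x}$ in the limit, and the inclusion $\{\psi^M\le 0\}\subseteq\{\psi^M\le\varepsilon_k\}$ transfers optimality via $f(x^*_{\varepsilon_k})\le f(x^*)$. The only cosmetic difference is that the paper phrases the optimality step as a contradiction argument with a hypothetical better feasible point, whereas you compare directly with an optimal $x^*$; the content is identical.
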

\begin{proof}
	Without loss of generality, we can assume   $x^*_{\varepsilon_k} \rightarrow \bar{x}$, for $k\rightarrow +\infty$. Taking the limit, we get
	$$\psi^M(\bar{x})=\lim_{k \rightarrow +\infty}\psi^M(x^*_{\varepsilon_k}) - \varepsilon_k \le 0,$$
	which implies that  $\bar{x}$ is feasible for \eqref{constrained},  \textcolor{black}{ hence for \eqref{OPVI} by Proposition \ref{reformulation}}. If $\bar{x}$ \textcolor{black}{was not} a minimum point for  $f$, then \textcolor{black}{there} would exists $\tilde{x}\in C$ such that  $f(\tilde{x})<f(\bar{x})$ and $\psi^S(\tilde{x})\le 0$. In this case,  $\tilde{x}$ would be feasible for  ($\varepsilon_k\text{-OPMVI}$), thus obtaining:
	$$f(x^*_{\varepsilon_k}) \le f(\tilde{x}) <f(\bar{x}).$$
	Taking  the limit we obtain the contradiction $f(\bar{x})<f(\bar{x})$.
\end{proof}

  While switching to the Minty gap function takes care of the possible non-convexity of the lower level, a new issue arises. In fact, computing  the Minty gap function at a point is  challenging when $G$ is not affine, since it requires solving a non-concave maximization problem. To address this issue, we  approximate  the Minty gap function through cutting planes techniques.  In particular, given any subset $B \subseteq C$, we define its corresponding  approximated Minty gap function
\begin{align*}
    \psi^M_{B}(x):=\sup\{\langle G(y),x-y \rangle \ | \ y \in B\}
\end{align*}
which is still convex and lies below $\psi^M$ at any point.
 Notice it  may also take negative values at points  $x\notin B$. We further relax  \eqref{OPMYe}  by replacing $\psi^M$ with $\psi^M_B$ , that is we consider the constrained problem:
\begin{align}
&\min \{f(x)\ | \ x\in C,  \  \psi_B^M(x)\le  \varepsilon\}. \label{OPMYeB}\tag{$(\varepsilon,B)$-OPMVI}
\end{align}
Notice that  the explicit constraint in the above problem amounts to finite number of inequalities when $B$ is finite.
\textcolor{black}{Moreover, we can find the  optimal solution to the original problem by considering a sequence of sets that approaches  a dense subset of $C$. Given a sequence $\{B_k\}_k$ of subsets of $C$ that is non decreasing with respect to the inclusion, we  define 
$$B:=\cup_{k\in \mathbb{N}} B_k,$$
so that the sequence    converges  to $B$   with respect to the Hausdorff distance~\cite{hausdorff1991set}.}

\begin{proposition}
   \textcolor{black}{Assume \textbf{(A1)-(A3)} hold true and consider a sequence $\{\varepsilon_k\}_{k}$ with  $\varepsilon_k \downarrow0$ and a sequence $\{B_k\}_{k}$ of non decreasing sets that converges to some $B$ dense in $C$. Let  $x^*_{\varepsilon_k}$  be an optimal solution to ($(\varepsilon_k,B_k)\text{-OPMVI}$). Then, every cluster point $\bar{x}$ of the sequence $\{x^*_{\varepsilon_k} \}_{k}$ is an optimal solution to \eqref{OPVI}.}
\end{proposition}

\begin{proof}
	\textcolor{black}{Without loss of generality, we can assume   $x^*_{\varepsilon_k} \rightarrow \bar{x}$. Now, take any $x \in C$. The sequence $\{\psi^M_{B_k}(x)\}_k$ is non decreasing and bounded from above by $ \psi^M(x)$. Therefore, it  admits a limit that is   exactly $\psi^M_B(x)$ since $B_k$ converges to $B$. Thus, we get $$\psi^M_{B_k}(x)\uparrow\psi^M(x),$$ as  the equality $\psi^M_B(x)=\psi^M(x)$ holds since   $B$ is dense in $C$.
    Moreover,~Theorem 7.13 \cite{Rudin1976} guarantees that the convergence is uniform thanks to the continuity of  the functions  $\psi_{B_k}^M$ and  $\psi^M$. 
    Thus, taking the limit, we get
    $$\psi^M(\bar{x})=\lim_{k \rightarrow +\infty}\psi_{B_k}^M(x^*_{\varepsilon_k}) - \varepsilon_k \le 0,$$
	that implies that  $\bar{x}$ is feasible for \eqref{constrained},  hence  for \eqref{OPVI} by Proposition \ref{reformulation}.  Ab absurdo, suppose that  $\bar{x}$  is not a minimum point of \eqref{OPVI}.  Then, there exists $\tilde{x}\in C$ such that  $f(\tilde{x})<f(\bar{x})$ and $\psi^S(\tilde{x})\le 0$.
    Then,  $\tilde{x}$ is feasible for  ($(\varepsilon_k,B_k)\text{-OPMVI}$) leading to the inequalities
	$$f(x^*_{\varepsilon_k}) \le f(\tilde{x}) <f(\bar{x}).$$
	Taking  the limit we obtain the contradiction $f(\bar{x})<f(\bar{x})$.}
\end{proof}

\textcolor{black}{The  inexactness guarantees that any solution $x^*\in C$ to the variational inequality  \eqref{Minty} is also a  Slater point  for problem \eqref{OPMYeB}, that is  $\psi^M_B(x^*)< \varepsilon $.  Leveraging the convexity of $\psi^M_B$, the existence of a Slater point implies that    (MFCQ) holds for problem \eqref{OPMYeB} at every feasible point.}
\textcolor{black}{Indeed,  convexity guarantees the equivalence 
$$x\in \arg\min \{ \psi^M_B(x) \ | \  x \in C \} \iff 0\in \partial\psi^M_B(x)+N_C(x), $$
which, in conjunction with the existence of a Slater point, implies that  
\begin{align}\label{eMFCQ}
    \{x \in C \ | \ 0 \in \partial\psi^M_B(x)+N_C(x) \} \cap\{x \in C \ | \ \psi^M_B(x)\ge \varepsilon \}=\emptyset
\end{align}
so that the qualification condition   is satisfied even beyond the feasible region.
Moreover, convexity entails also that  \eqref{eMFCQ} is  equivalent to    the global error bound over  $C$ as given in \cite[Definition 7.1]{Io16} thanks to  \cite[Theorem  7.2]{Io16}.}
\textcolor{black}{ Therefore, the  inexactness pays off as it guarantees enough regularity to employ exact penalty methods \cite{Ye12}.}

 \textcolor{black}{To this aim, for any given $\varepsilon>0$ and $B \subseteq C$, we define the problem
\begin{align*}
&\min\{f(x) + \rho (\psi^M_{B}(x)-\varepsilon)^+ \tag{$P(\rho,\varepsilon,B)$}\ | \   \ x\in C\}, \label{Pen}
\end{align*}
where the  positive part of the constraint   is penalized.} \textcolor{black}{In the next section, we prove that   all the optimal solutions to 
\eqref{Pen} 
  are also optimal solutions to \eqref{OPMYeB} when the penalty parameter is sufficiently large. Moreover, we show that this  threshold is uniformly bounded  over all the possible choices of the set $B$.}

\section{Uniform exact penalization}
\label{sec3}

 Before presenting the main results, we need to recall some properties regarding set valued maps (see \cite{rockafellar2009variational} for proofs and more details). \begin{definition}
    Given a set $C \subseteq \mathbb{R}^n$,  the set-valued map $F :C \rightrightarrows \mathbb{R}^n$ is closed  if:
    \begin{align*}
        x_i \rightarrow x_\infty, \hspace{0.2cm} y_i \in F(x_i), \hspace{0.2cm} y_i\rightarrow y_{ \infty}\implies y_\infty \in F(x_\infty).
    \end{align*}
\end{definition}

In our framework, 
 $N_C: C\rightrightarrows \mathbb{R}^n$ ,  $\partial f:C \rightrightarrows \mathbb{R}^n $ and $\partial \psi^M:C \rightrightarrows \mathbb{R}^n $ are closed set-valued maps.
Also, $f$ and $\psi^M_B$ are both real-valued convex  functions, so they are locally Lipschitz continuous on $\mathbb{R}^n$. Since $C$ is compact, local Lipschitz continuity guarantees that  their subdifferentials are bounded on $C$, i.e. there exist positive $R_f$ and $R_B$ such that any $x$ in $C$ satisfies:
 \begin{align*}
      \partial f(x)\subseteq B(0,R_f) \hspace{0.2cm} \text{and} \hspace{0.2cm} \partial \psi^M_B(x)\subseteq B(0,R_B). 
 \end{align*}
 Moreover, we have the following explicit description of the subdifferential of the approximated Minty gap function 
 \begin{align*}
            \partial \psi^M_{B}(x) =\text{conv}\{G(y)\ | \ y \in \arg\max_{z\in \overline{B}} \langle G(z),x-z\rangle \},
        \end{align*} where the operator conv denotes the convex hull of the set  and  $\overline{B} $ denotes the closure of  $B$.
As a consequence,  there exists a bound on the  subdifferentials that is independent from the choice of $B$:
 \begin{align*}
       \partial \psi^M_B(x)\subseteq B(0,\bar{G}), \hspace{0.2cm}\forall x\in C, \forall B\subseteq C, 
 \end{align*}
where $\bar{G}:=\max \{\ \|G(y)\| \ | \ y\in C\}$.
 In the next proposition we exploit both closedness and boundedness to show that the solutions of  each penalized problem  are also solutions to inexact bilevel problem \eqref{OPMYeB} whenever  the penalty parameter is sufficiently large.
\begin{proposition}\label{proprhoB}
   Assume   \textbf{(A1)-(A3)} hold true. Then, there exists  ${\rho}_B>0$ such that any   solution to \eqref{Pen} is also a solution to \eqref{OPMYeB} whenever $\rho \ge \rho_B$.
\end{proposition}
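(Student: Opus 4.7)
The plan is to argue by contradiction via first-order optimality. Since problem $P(\rho,\varepsilon,B)$ consists in minimizing the convex function $f + \rho(\psi^M_B-\varepsilon)^+$ over the convex compact set $C$, any solution $x_\rho$ satisfies $0\in \partial f(x_\rho) + \rho\,\partial (\psi^M_B-\varepsilon)^+(x_\rho) + N_C(x_\rho)$. If we assume toward contradiction that $\psi^M_B(x_\rho)>\varepsilon$, then locally $(\psi^M_B-\varepsilon)^+$ coincides with $\psi^M_B-\varepsilon$ and its subdifferential reduces to $\partial\psi^M_B(x_\rho)$. Extracting $v\in \partial f(x_\rho)$, $w\in\partial \psi^M_B(x_\rho)$ and $n\in N_C(x_\rho)$ with $v+\rho w + n = 0$, and exploiting the fact that $N_C(x_\rho)$ is a cone so that $n/\rho\in N_C(x_\rho)$, we obtain $\|w + n/\rho\| = \|v\|/\rho \le R_f/\rho$.

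Next I would introduce the quantity
\[
\eta_B := \inf\bigl\{\|w+n\| \ \big| \ x\in C,\ \psi^M_B(x)\ge\varepsilon,\ w\in\partial\psi^M_B(x),\ n\in N_C(x)\bigr\}
\]
and prove $\eta_B>0$. Suppose not: there are sequences $x_k\in C$ with $\psi^M_B(x_k)\ge \varepsilon$, $w_k\in\partial\psi^M_B(x_k)$ and $n_k\in N_C(x_k)$ such that $w_k+n_k\to 0$. Compactness of $C$ and the uniform bound $\|w_k\|\le\bar G$ allow to extract subsequences with $x_k\to x_\infty$ and $w_k\to w_\infty$, hence $n_k\to -w_\infty$. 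The closedness of $\partial\psi^M_B$ and $N_C$ yields $w_\infty\in\partial\psi^M_B(x_\infty)$ and $-w_\infty\in N_C(x_\infty)$, while continuity of $\psi^M_B$ gives $\psi^M_B(x_\infty)\ge\varepsilon$. Consequently $0=w_\infty+(-w_\infty)\in \partial\psi^M_B(x_\infty)+N_C(x_\infty)$, so that $\lambda=1$ belongs to $Q_B(x_\infty)$, contradicting the eMFCQ established in Proposition~\ref{P:eMFCQ}.

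Setting $\rho_B := R_f/\eta_B$ (or any larger value), whenever $\rho\ge\rho_B$ no solution to $P(\rho,\varepsilon,B)$ can satisfy $\psi^M_B(x_\rho)>\varepsilon$, since this would give $\eta_B\le \|w+n/\rho\|\le R_f/\rho\le \eta_B$ with one of the inequalities strict. Hence every solution $x_\rho$ is feasible for \eqref{OPMYeB}; the penalty term vanishes at $x_\rho$, so for every feasible $x$ of \eqref{OPMYeB}
\[
f(x_\rho)=f(x_\rho)+\rho(\psi^M_B(x_\rho)-\varepsilon)^+\le f(x)+\rho(\psi^M_B(x)-\varepsilon)^+ = f(x),
\]
proving optimality for \eqref{OPMYeB}.

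The main obstacle is establishing $\eta_B>0$: it requires carefully combining closedness of the graphs of $\partial\psi^M_B$ and $N_C$ with the uniform boundedness of $\partial\psi^M_B$ by $\bar G$, so that the limiting configuration genuinely produces a point violating eMFCQ. Everything else is routine convex subdifferential calculus, and the resulting bound $\rho_B=R_f/\eta_B$ will form the basis for later arguments about uniformity in $B$.
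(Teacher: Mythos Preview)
Your proof is correct and follows essentially the same route as the paper: both reach a contradiction with eMFCQ (Proposition~\ref{P:eMFCQ}) by passing to the limit in the first-order optimality conditions, using compactness of $C$, the bound $\|\partial\psi^M_B\|\le\bar G$, and closedness of the subdifferential and normal-cone maps. The only difference is organizational---you isolate the quantity $\eta_B$ and obtain an explicit threshold $\rho_B\approx R_f/\eta_B$, whereas the paper runs a single contradiction argument with $\rho_k\to\infty$; one small point: at $\rho=R_f/\eta_B$ none of your three inequalities need be strict, so take $\rho_B$ strictly larger than $R_f/\eta_B$ (as your parenthetical already hints).
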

\begin{proof}
      By contradiction, assume there exist sequences $\{\rho_k\}_{k}$ and $\{x_k\}_{k}$ such that $\rho_k \rightarrow +\infty$ and $x_k$ solves ($P(\rho_k,\varepsilon,B)$) but not \eqref{OPMYeB}. Therefore, $x_k$ is not feasible for the latter problem, so that $\psi^M_{B}(x_k) >\varepsilon$ and hence $\partial(\psi^{M}_{B}(x_k)-\varepsilon)^+=\partial\psi^{M}_{B}(x_k)$. Since ($P(\rho_k,\varepsilon,B)$) is a convex  problem, the optimality conditions for $x_k$ read
    \begin{align*}
	    0\in \partial f(x_k) + \rho_k\partial\psi^{M}_{B}(x_k) + N_C(x_k),
    \end{align*}
    or equivalently
    \begin{align*}
    0 \in \frac{1}{\rho_k}\partial f(x_k) + \partial\psi^{M}_{B}(x_k) + N_C(x_k).
	\end{align*}
\textcolor{black}{Thanks to the compactness of $C$, there exists a point $\bar{x}\in C$ and a subsequence $\{x_{k_m}\}_{m}$ such that $x_{k_m} \rightarrow \bar{x}$.} 
Since the subdifferentials of $f$ are bounded on $C$, we have
$$\dfrac{1}{{\rho}_{k_m}}\partial f(x_{k_m}) \subseteq\dfrac{1}{{\rho}_{k_m}}B(0,R_f)$$
for some $R_f>0$. Since   $ \partial \psi^M_{B}$ and  $ N_C$ are   closed maps and  $ \partial \psi^M_{B}$  is also bounded on $C$, taking the limit provides: 
 \begin{align*}
        0 \in \partial \psi^M_{B}(\bar{x})+ N_C(\bar{x}).
\end{align*}
 Finally,  \textcolor{black}{$\psi^{M}_{B}(\bar{x})\ge\varepsilon$  holds by the continuity of $\psi^M_B$, in contradiction with \eqref{eMFCQ}.} %Therefore, $1 \in Q_B(\bar{x})$ holds so that \eqref{eMFCQ} doesn't, in contradiction with Proposition \ref{P:eMFCQ}.
	
\end{proof}

From now onwards
 $\rho_B$ will denote the minimum  of  the exact penalty parameters in the above proposition.  In principle it might  exist a particular sequence of sets $\{B_k\}_{k}$ such that the  sequence of exact penalty parameters $\rho_{B_k}$ goes to infinity. Actually, this is not the case since  there exists a uniform bound on the set of exact penalty parameters. To show this,  we need a result on the uniform convergence for  an arbitrary sequence of approximated Minty gap functions.

\begin{proposition}\label{Uniform} Assume \textbf{(A1)} and \textbf{(A3)} hold true and take a  sequence of subsets $\{B_k\}_{k\in \mathbb{N}}$  of $C$.  Define the sets:
\begin{align*}
    \hat{B}_k:=\overline{\left(\bigcup_{m \ge k}B_m\right)} \hspace{0.5cm} \text{and} \hspace{0.5cm} \hat{B}: = \bigcap_{k= 1}^{+\infty}\hat{B}_k.
\end{align*}
Any $x\in C$ satisfies
\begin{align*}
    \sup_{m\ge k}\psi^M_{B_m}(x)&=\psi^M_{\hat{B}_k}(x),
\end{align*}
and
\begin{align*}
\lim_{k\rightarrow + \infty}\psi^M_{\hat{B}_k}(x) =\psi^M_{\hat{B}}(x),
   \end{align*}
  where the convergence is uniform on $C$.
	\end{proposition}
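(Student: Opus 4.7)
The plan has two separate ingredients: a set-level identity for the first equality, and a compactness + Dini argument for the second one.

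For the first identity, the idea is simply to swap the two suprema. By definition
\[
\sup_{m\ge k}\psi^M_{B_m}(x)=\sup_{m\ge k}\sup_{y\in B_m}\langle G(y),x-y\rangle=\sup_{y\in\bigcup_{m\ge k}B_m}\langle G(y),x-y\rangle.
\]
Since $G$ is continuous on the compact set $C$, the function $y\mapsto\langle G(y),x-y\rangle$ is continuous, and continuous functions attain the same supremum on a set and on its closure. Thus the last quantity equals $\psi^M_{\hat{B}_k}(x)$.

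For the second part I would first establish pointwise convergence. The sequence $\{\hat{B}_k\}$ is nested and decreasing, so $\psi^M_{\hat{B}_k}(x)$ is monotone nonincreasing in $k$ and bounded below by $\psi^M_{\hat{B}}(x)$ (since $\hat{B}\subseteq\hat{B}_k$). Hence the limit exists and is at least $\psi^M_{\hat{B}}(x)$. For the reverse inequality, fix $x\in C$: each $\hat{B}_k$ is closed in the compact set $C$, hence compact, so the continuous function $y\mapsto\langle G(y),x-y\rangle$ attains its maximum at some $y_k\in\hat{B}_k$. By compactness of $C$, a subsequence $y_{k_j}\to y^{\star}$. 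Since the $\hat{B}_k$ are decreasing and closed, $y^{\star}\in\hat{B}_m$ for every $m$, hence $y^{\star}\in\hat{B}$. Continuity of $G$ then yields
\[
\lim_j\psi^M_{\hat{B}_{k_j}}(x)=\lim_j\langle G(y_{k_j}),x-y_{k_j}\rangle=\langle G(y^{\star}),x-y^{\star}\rangle\le\psi^M_{\hat{B}}(x),
\]
which combined with monotonicity gives pointwise convergence (and also shows $\hat{B}\neq\emptyset$, so that $\psi^M_{\hat{B}}$ is a real-valued convex function).

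For the uniform convergence I would invoke Dini's theorem: the $\psi^M_{\hat{B}_k}$ are convex and real-valued on $\mathbb{R}^n$, hence continuous; they form a monotone nonincreasing sequence; the limit $\psi^M_{\hat{B}}$ is likewise convex and continuous; and $C$ is compact. Dini's theorem then upgrades pointwise convergence to uniform convergence on $C$.

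The main technical point I expect is the attainment-and-extraction step establishing the reverse inequality in the pointwise limit: one must carefully exploit that the nesting and closedness of $\hat{B}_k$ force any subsequential limit of maximizers to lie in the full intersection $\hat{B}$, so that the limiting value can be bounded by $\psi^M_{\hat{B}}(x)$. Everything else (supremum interchange, passing to closures via continuity, and the final Dini upgrade) is essentially bookkeeping.
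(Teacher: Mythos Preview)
Your proposal is correct and follows essentially the same approach as the paper: supremum interchange plus continuity for the first identity, compactness-based extraction of maximizers to get pointwise convergence, and Dini's theorem for the uniform upgrade. The only cosmetic difference is that for the first identity the paper explicitly picks a maximizer in $\hat{B}_k$ and approximates it by a sequence from the $B_m$'s, whereas you invoke the general fact that a continuous function has the same supremum on a set and its closure; both arguments are equivalent.
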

 \begin{proof}
 Notice that  $\hat{B}$ is not empty since  the family of sets $\{\hat{B}_k\}_{k\in \mathbb{N}}$ is non increasing, meaning that $\hat{B}_{h}\subseteq \hat{B}_k$ holds whenever $h \ge k$, and therefore it  has the finite intersection property.
 The inequality $\sup_{m\ge k}\psi^M_{B_m}(x)\le\psi^M_{\hat{B}_k}(x)$ follows from  $B_m\subseteq \hat{B}_k$ for all  $m \ge k$. We  show that the opposite inequality also holds true. We take   $\bar{y}$ such that:
	$$\psi^M_{\hat{B}_k}(x)=\langle G(\bar{y}),x- \bar{y}\rangle.$$
Since $\bar{y} \in \hat{B}_k$, there exists a sequence of indices $\{m_j\}_j$ with $m_j \ge k$,  such that $y_{m_j}\in B_{m_j}$ and $y_{m_j}\rightarrow \bar{y}$. Then,  the following inequalities hold true:
\begin{align*}
    \sup_{m\ge k}\psi^M_{B_m}(x)\ge \psi^M_{B_{m_j}}(x) \ge \langle G(y_{m_j}),x- y_{m_j}\rangle.
\end{align*}
Taking the the limit, we get the desired inequality 
\begin{align*}
    \sup_{m\ge k}\psi^M_{B_m}(x)\ge \langle G(\bar{y}),x- \bar{y}\rangle=\psi^M_{\hat{B}_k}(x),
\end{align*}
thanks to the continuity of $G$.

Given $x \in C$, the inequality $\psi^M_{\hat{B}_k}(x) 
\ge \psi^M_{\hat{B}}(x)$ holds true since $\hat{B} \subseteq \hat{B}_k$ and moreover the left-hand side is a non increasing sequence. Therefore, this sequence of values has a limit. Consider any $y_k \in \hat{B}_k$ such that $\psi^M_{\hat{B}_k}(x)=\langle G(y_k),x- y_k\rangle$. Without any loss of generality, we can assume that $y_k \rightarrow \hat{y}$ for some $\hat{y} \in C$. Since the sequence of sets is non increasing, then $\hat{y}$ belongs to their intersection, i.e. $\hat{y} \in \hat{B}$. Thanks to the continuity of $G$, we get:  
$$\psi^M_{\hat{B}}(x)\le \lim_{k \rightarrow +\infty} \psi^M_{\hat{B}_k}(x) =\lim_{k \rightarrow +\infty} \langle G(y_{k}),x- y_{k}\rangle= \langle G(\hat{y}),x- \hat{y}\rangle\le \psi^M_{\hat{B}}(x).$$
 
 The uniform convergence follows from Theorem 7.13 \cite{Rudin1976}% Dini's theorem
 . In fact, the sequence of continuous functions $\{\psi^M_{\hat{B}_k}\}_{k \in \mathbb{N}}$ is non increasing on the compact set $C$, and  its pointwise limit $\psi^M_{\hat{B}}$ is also continuous. Therefore,   the convergence is uniform.
\end{proof}

The next theorem states that there exists a uniform bound for the exact penalty parameters. The main idea is to generalize the proof of Proposition \ref{proprhoB} by considering a sequence of problems \eqref{OPMYeB}. Proposition \ref{Uniform} is a key tool for the proof as it guarantees the existence of a suitable ``limit'' set  
and the corresponding  approximated problem 
 for which  {\eqref{eMFCQ}}  still holds.

\begin{thm}\label{Uniform_exact_penalization}
    Assume \textbf{(A1)-(A3)} hold true. Then, $\rho_B$ is bounded uniformly over all possible subsets $B \subseteq C$, that is
    \begin{align*}
        \sup_{B \subseteq C} \rho_B < +\infty.
    \end{align*}
        
\end{thm}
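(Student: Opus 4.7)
The plan is to argue by contradiction, following the same scheme used in the proof of Proposition \ref{proprhoB} but leveraging Proposition \ref{Uniform} to pass to the limit along a sequence of approximation sets. Suppose $\sup_{B \subseteq C} \rho_B = +\infty$. Then one can pick a sequence $\{B_k\}_k$ of subsets, penalty parameters $\rho_k \to +\infty$ with $\rho_k < \rho_{B_k}$, and points $x_k \in C$ such that each $x_k$ solves $(P(\rho_k,\varepsilon,B_k))$ but is infeasible for $(\varepsilon,B_k)$-OPMVI, i.e., $\psi^M_{B_k}(x_k) > \varepsilon$. The optimality condition for the convex problem $(P(\rho_k,\varepsilon,B_k))$ reads
\begin{align*}
0 \in \partial f(x_k) + \rho_k\, \partial \psi^M_{B_k}(x_k) + N_C(x_k),
\end{align*}
so picking $v_k \in \partial f(x_k)$, $w_k \in \partial \psi^M_{B_k}(x_k)$, $n_k \in N_C(x_k)$ with $v_k + \rho_k w_k + n_k = 0$, and using the explicit formula for $\partial \psi^M_{B_k}(x_k)$ together with Carathéodory, I would write $w_k = \sum_{i=1}^{n+1} \lambda_{k,i}\, G(y_{k,i})$ with $y_{k,i} \in \overline{B_k}$ maximizers. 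Dividing by $\rho_k$, the $v_k/\rho_k$ term vanishes (since $\|v_k\|\le R_f$) and $n_k/\rho_k$ stays bounded since $\|w_k\|\le \bar{G}$ uniformly in $B_k$.

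Next, by compactness I extract a subsequence along which $x_k \to \bar{x}$, $\lambda_{k,i} \to \lambda_i$, $y_{k,i} \to y_i$, $w_k \to w_\infty = \sum_i \lambda_i G(y_i)$, and $n_k/\rho_k \to u_\infty \in N_C(\bar{x})$ by closedness of the normal cone map, with $w_\infty + u_\infty = 0$. Defining $\hat{B}_k$ and $\hat{B}$ as in Proposition \ref{Uniform}, each $y_i$ lies in $\hat{B}$: for every fixed $j$ and every $k\ge j$ one has $y_{k,i}\in \overline{B_k}\subseteq \hat{B}_j$, and since $\hat{B}_j$ is closed, $y_i\in \hat{B}_j$; intersecting over $j$ yields $y_i\in \hat{B}$.

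The main obstacle is showing that $w_\infty$ belongs to $\partial \psi^M_{\hat{B}}(\bar{x})$, which amounts to proving that each $y_i$ with $\lambda_i>0$ maximizes $\langle G(\cdot),\bar{x}-\cdot\rangle$ on $\hat{B}$. The difficulty is that $y_{k,i}$ only maximizes on $\overline{B_k}$, and an arbitrary $z\in\hat{B}$ need not lie in $\overline{B_k}$. I would handle this with a diagonal argument: given $z\in\hat{B}\subseteq \hat{B}_k$ for every $k$, one selects $z_k\in B_{m_k}$ with $m_k\ge k$ and $z_k\to z$. Then from
\begin{align*}
\langle G(y_{m_k,i}), x_{m_k}-y_{m_k,i}\rangle = \psi^M_{B_{m_k}}(x_{m_k}) \ge \langle G(z_k), x_{m_k}-z_k\rangle,
\end{align*}
passing to the limit via continuity of $G$ and the subsequential convergences established above gives $\langle G(y_i),\bar{x}-y_i\rangle \ge \langle G(z),\bar{x}-z\rangle$, so each $y_i$ is indeed a maximizer on $\hat{B}$ and $w_\infty \in \partial \psi^M_{\hat{B}}(\bar{x})$.

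To conclude, combining $w_\infty + u_\infty = 0$ with $w_\infty\in \partial\psi^M_{\hat{B}}(\bar{x})$ and $u_\infty\in N_C(\bar{x})$ gives $0\in \partial\psi^M_{\hat{B}}(\bar{x}) + N_C(\bar{x})$. Moreover, $\psi^M_{B_k}(x_k) > \varepsilon$ implies $\psi^M_{\hat{B}_k}(x_k) > \varepsilon$, and the uniform convergence in Proposition \ref{Uniform} together with continuity of $\psi^M_{\hat{B}}$ yields $\psi^M_{\hat{B}}(\bar{x})\ge \varepsilon$, so $(\psi^M_{\hat{B}}(\bar{x})-\varepsilon)^-=0$. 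Hence $1\in Q_{\hat{B}}(\bar{x})$, which contradicts Proposition \ref{P:eMFCQ} applied to the subset $\hat{B}\subseteq C$.
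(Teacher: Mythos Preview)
Your proof is correct and reaches the same contradiction as the paper, but the key step---showing that the limit $w_\infty$ of the subgradients $w_k\in\partial\psi^M_{B_k}(x_k)$ lies in $\partial\psi^M_{\hat{B}}(\bar{x})$---is handled by a genuinely different mechanism. The paper works functionally: it starts from the subgradient inequality $\psi^M_{B_k}(y)\ge \psi^M_{B_k}(x_k)+\langle v_k,y-x_k\rangle$, takes the supremum over $h\ge k$ to invoke the identity $\sup_{h\ge k}\psi^M_{B_h}=\psi^M_{\hat{B}_k}$ from Proposition~\ref{Uniform}, and then passes to the limit (using a Lipschitz-type estimate to swap $x_h$ for $x_\infty$) to obtain the subgradient inequality for $\psi^M_{\hat{B}}$ directly. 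You instead work structurally: via Carath\'eodory you decompose $w_k$ as a convex combination of $G(y_{k,i})$ with $y_{k,i}$ maximizers on $\overline{B_k}$, track the limits $y_i\in\hat{B}$, and use a diagonal approximation of any $z\in\hat{B}$ by $z_k\in B_{m_k}$ to show each $y_i$ remains a maximizer on $\hat{B}$. Your route is more concrete and uses Proposition~\ref{Uniform} only at the very end (for $\psi^M_{\hat{B}}(\bar{x})\ge\varepsilon$), whereas the paper's argument leans on that proposition throughout; conversely, the paper's approach never appeals to the explicit form of $\partial\psi^M_B$ and would generalize to settings where such a Carath\'eodory description is unavailable. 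One small point worth making explicit in your write-up: after extracting the convergent subsequence you should define $\hat{B}_k$ and $\hat{B}$ with respect to \emph{that} subsequence, so that the indices $m_k$ in your diagonal step are indices of the relabeled sequence and the convergences $x_{m_k}\to\bar{x}$, $y_{m_k,i}\to y_i$ follow immediately from $m_k\ge k\to\infty$.
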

\begin{proof} By contradiction, suppose  there exists a sequence of sets  $\{B_k\}_{k \in  \mathbb{N}}$ such that $\rho_{B_k} \uparrow +\infty$. Consider $\rho_k=\rho_{B_k}/2$, hence there exists $x_k\in C$ such that  $\psi^M_{B_{k}}(x_k) >\varepsilon$, while
\begin{align*}
    x_k\in \arg\min \{ f(x)+ \rho_k (\psi^M_{B_{k}} (x)-\varepsilon)^+\ | \ x \in C\}.
\end{align*}
The optimality conditions guarantee the existence of $v_k\in \partial \psi^M_{B_{k}}(x_{k})$ and $u_k\in N_C(x_k)$ such that:
 \begin{align}\label{FVU}
        0=\frac{1}{\rho_k}\partial f(x_{k}) + v_k + u_k.
    \end{align}
Since all the subdifferentials are contained in a unique ball, we can assume $v_k \rightarrow v_\infty$ as well as $x_k \rightarrow x_\infty$. Then, equation \eqref{FVU} implies $u_k \rightarrow -v_\infty$  and the closedness of the normal cone guarantees $-v_\infty\in N_C(x_\infty)$. The subgradient inequality provides
$$\psi^M_{B_k}(y)\ge \psi^M_{B_{k}}(x_{k}) + \langle v_{k}, y - x_{k} \rangle.$$
Thanks to Proposition \ref{Uniform}, taking the supremum on both sides gives:
\begin{align}\label{I:sup}
	\psi^M_{\hat{B}_{k}}(y)&\ge \sup_{h\ge k}(\psi^M_{B_{h}}(x_{h}) + \langle v_{h}, y - x_{h} \rangle)\notag\\
	&\ge  \sup_{h\ge k}\psi^M_{B_{h}}(x_{h}) + \inf_{h\ge k}\langle v_{h}, y - x_{h} \rangle.
\end{align}
We also have:
\begin{align*}
		\psi^M_{B_{h}}(x_\infty) &= \sup_{y \in B_{h}} \langle G(y),x_\infty -y \rangle  \\
		& =\sup_{y \in B_{h}} \big( \langle G(y),x_\infty -x_{h} \rangle + \langle G(y),x_{h}-y \rangle\big)\\
		&\le \bar{G}||x_\infty-x_{h}|| + \psi^M_{B_{h}}(x_{h}),
\end{align*}
where the final inequality follows from the Cauchy-Schwarz inequality and the definition of the approximated Minty gap function. Plugging this latter relation into \eqref{I:sup}, we get:
\begin{align*}
	\psi^M_{\hat{B}_{k}}(y)&\ge  \sup_{h\ge k}\big(\psi^M_{B_{h}}(x_\infty) - \bar{G}||x_\infty-x_{h}||\big) +\inf_{h\ge k} \langle v_{h}, y - x_{h} \rangle\\
    & \ge \sup_{h\ge k}\psi^M_{B_{h}}(x_\infty) - \sup_{h \ge k} \bar{G}||x_\infty-x_{h}||+\inf_{h\ge k} \langle v_{h}, y - x_{h} \rangle.
	\end{align*}
Taking the limit, Proposition \ref{Uniform} guarantees
 \begin{align*}
        \psi^M_{\hat{B}}(y)\ge \psi^M_{\hat{B}}(x_\infty) + \langle v_\infty,y -x_\infty\rangle,
    \end{align*}
which implies $v_\infty \in \partial \psi^M_{\hat{B}}(x_\infty)$. Therefore, we get:
\begin{align*}
         0\in \partial \psi^M_{\hat{B}}(x_\infty) + N_C(x_\infty).
     \end{align*}
Finally, the uniform \textcolor{black}{convergence} in Proposition \ref{Uniform} guarantees
\begin{align*}
        \psi^M_{\hat{B}}(x_\infty) = \lim_{k\rightarrow +\infty}\psi^M_{\hat{B}_{k}}(x_k)\ge  \varepsilon,
    \end{align*}
\textcolor{black}{which gives the contradiction, as \eqref{eMFCQ} holds for every choice of $B \subseteq C$.}
%or equivalently  $(\psi^M_{\hat{B}}(x_\infty)-\varepsilon)^-=0$. As a consequence, $1\in Q _{\hat{B}}(x_\infty)$ holds so that \eqref{eMFCQ} doesn't for ($(\varepsilon,\hat{B})-$OPMVI), in contradiction with Proposition \ref{P:eMFCQ}.

\end{proof}
\section{Algorithms}\label{sec4}
In this section we present three algorithms, employing techniques of  cutting planes (\cite{Zangwill1971} and \cite{Nguyen84}) to approximate the value of the Minty gap function. The first algorithm   requires %knowledge of the value of t
\textcolor{black}{the exact evaluation of} the Minty gap function  at a given point and finds an optimal solution to \eqref{OPMYe}. \textcolor{black}{Since evaluating this function is challenging when the operator $G$ is not affine, we present  two algorithms  that utilize both the Stampacchia gap function and  a computable approximation of the Minty gap function. They    require that  the operator $G$ is Lipschitz, but they are able to compute only an approximated  optimal solution to \eqref{OPMYe}.} 

\begin{figure}[h]
    \centering
    \includegraphics[width=1\linewidth]{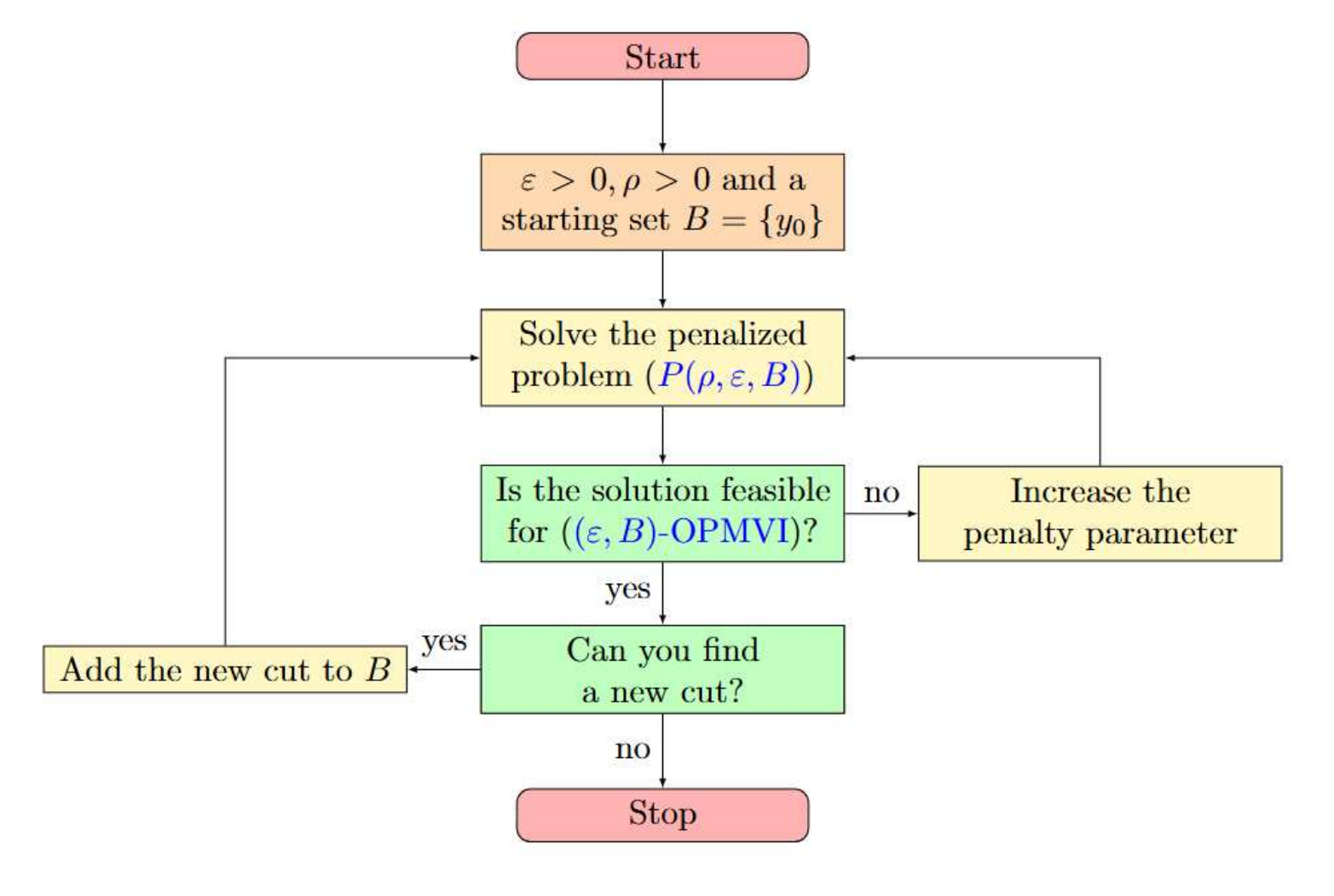}
    \caption{Flowchart of the method}
    \label{fig2}
\end{figure} 

All the  algorithms  have been devised using the following general scheme.
At each iteration we compute an optimal solution  to the penalized problem \eqref{Pen} and check whether the computed solution is feasible for \eqref{OPMYeB} or not. If it isn't, we increase the penalty parameter. This  can happen only a finite number of times thanks to  Theorem \ref{Uniform_exact_penalization}, since  the penalty parameter stops growing as soon as it  exceeds the uniform bound. Otherwise, if the optimal solution is feasible for \eqref{OPMYeB}, we exclude it from successive iterations by adding a new element to $B$. In fact, the more $B$ grows, the closer  the approximated Minty gap  function gets to the  Minty gap function. Geometrically, this corresponds to  increasing the number of  half-spaces used to approximate  the  feasible region of  \eqref{OPMYe}. Therefore, we will refer to  the elements of $B$ as cuts.

 More specifically, we start with a random cut $B_0=\{y_0\}$ and add new cuts as the algorithm goes on. Given a set of cuts $B_k=\{y_0,\dots,y_k\}$, we define the function:
\begin{align*}
&\psi^M_{k}(x):=\max\{\langle G(y_i),x-y_i \rangle \ | \ {i \in \{0,\dots,k\}} \}
\end{align*}
and in the limit case:
\begin{align*}
&\psi^M_{\infty}(x):=\sup\{\langle G(y_i),x-y_i\rangle \ | \ {i \in \mathbb{N}}\}.
\end{align*}

The way we choose new cuts determines the converging properties of the algorithm. Using the value  of the Minty gap function, we compute cuts that  lead the algorithm to the optimal solutions to \eqref{OPMYe}.
\begin{algorithm}[h]
\caption{}\label{alg1}
\begin{algorithmic}[1]
\State \textbf{input:} $y_0 \in  C$, $\rho_0>0$, $\sigma>1$, $\delta_k>0$ and $ \delta_k\downarrow0$.
\For{$k=0,1,\dots$}
\State $z \in \arg\min\{f(x) + \rho_k (\psi_k^M(x)-\varepsilon)^+\ | \ x \in C\}$
\While{$\psi^{M}_{k} (z)>\varepsilon$}
    \State{$ \rho_{k}=\sigma \rho_k$
    \State$z \in \arg\min\{f(x) + \rho_k (\psi_k^M(x)-\varepsilon)^+\ | \ x \in C\}$}
\EndWhile
\State $x_k=z$
\If{$\psi^M(x_k)\le \varepsilon$}
\State {\textbf{stop}}

\Else
\State Find $y_{k+1}$ such that:
			$\langle G(y_{k+1}),x_k - y_{k+1}\rangle - \varepsilon> (\psi^M(x_k)-\varepsilon - \delta_k)^+ $
\State $\rho_{k+1}=\rho_k$
\EndIf    
\EndFor
\end{algorithmic}
\end{algorithm}

\begin{theorem} \label{Ideal_theorem}
	Assume  \textbf{(A1)-(A3)} hold true and let  $\{x_k\}_k$ be a sequence generated by Algorithm \ref{alg1}. If the algorithm stops at  iteration  $k$, then $x_{k}$ is an optimal solution to \eqref{OPMYe}. Otherwise, every cluster point of the sequence is an optimal solution to \eqref{OPMYe}.
\end{theorem}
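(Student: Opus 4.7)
The plan is to handle the two cases of the statement in turn, leaning on two structural facts that the main loop enforces at every successful iteration. First, the inner while loop terminates in finitely many inner iterations: by Theorem \ref{Uniform_exact_penalization} there is a uniform bound $\bar\rho$ such that $\rho_{B_k}\le\bar\rho$ for every $k$, so multiplying by $\sigma>1$ eventually brings $\rho_k$ above $\rho_{B_k}$, at which point Proposition \ref{proprhoB} forces $\psi_k^M(z)\le\varepsilon$ and the loop exits. Second, once the loop exits with $z=x_k$, the resulting $x_k$ is actually optimal for $(\varepsilon,B_k)$-OPMVI: for every $x\in C$ with $\psi_k^M(x)\le\varepsilon$ the penalty term vanishes on both $x$ and $x_k$, and minimality of $x_k$ for $P(\rho_k,\varepsilon,B_k)$ gives $f(x)=f(x)+\rho_k(\psi_k^M(x)-\varepsilon)^+\ge f(x_k)+\rho_k(\psi_k^M(x_k)-\varepsilon)^+=f(x_k)$.

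For the finite termination case, the stopping criterion $\psi^M(x_k)\le\varepsilon$ gives feasibility for \eqref{OPMYe}, and since $B_k\subseteq C$ the feasible set of \eqref{OPMYe} sits inside that of $(\varepsilon,B_k)$-OPMVI; optimality on the larger set therefore transfers to optimality on the smaller one.

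For the non-terminating case, fix a cluster point $\bar x$ and a subsequence $x_{k_j}\to\bar x$, and extract a further subsequence with $y_{k_j+1}\to\bar y\in C$. To obtain feasibility $\psi^M(\bar x)\le\varepsilon$ I would combine two opposite bounds on $\langle G(y_{k_j+1}),\cdot-y_{k_j+1}\rangle$. The cut rule, together with the elementary inequality $(a)^+\ge a$, yields
\begin{equation*}
\langle G(y_{k_j+1}),x_{k_j}-y_{k_j+1}\rangle>\psi^M(x_{k_j})-\delta_{k_j},
\end{equation*}
and passing to the limit along $j$, using continuity of $\psi^M$ and $G$ on the compact set $C$ together with $\delta_{k_j}\downarrow 0$, gives $\langle G(\bar y),\bar x-\bar y\rangle\ge\psi^M(\bar x)$. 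Conversely, for every $j'>j$ the cut $y_{k_j+1}$ belongs to $B_{k_{j'}}$ and the termination of the while loop at iteration $k_{j'}$ gives $\psi_{k_{j'}}^M(x_{k_{j'}})\le\varepsilon$, hence $\langle G(y_{k_j+1}),x_{k_{j'}}-y_{k_j+1}\rangle\le\varepsilon$; letting $j'\to\infty$ first and then $j\to\infty$ yields $\langle G(\bar y),\bar x-\bar y\rangle\le\varepsilon$. Combining the two bounds gives $\psi^M(\bar x)\le\varepsilon$.

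Optimality of $\bar x$ is then immediate: any optimal solution $x^*$ of \eqref{OPMYe} satisfies $\psi_k^M(x^*)\le\psi^M(x^*)\le\varepsilon$ for every $k$, so $x^*$ is feasible for each $(\varepsilon,B_k)$-OPMVI and the optimality of $x_k$ established above yields $f(x_k)\le f(x^*)$; continuity of $f$ and the convergence $x_{k_j}\to\bar x$ give $f(\bar x)\le f(x^*)$. The main obstacle is precisely the feasibility step in the non-terminating case: the cut added at iteration $k_j$ only constrains iterates with larger index, so one must pass to the limit in two stages, first using later iterates along the subsequence to bound $\langle G(\bar y),\bar x-\bar y\rangle$ from above by $\varepsilon$, and then letting the cut index grow to recover $\psi^M(\bar x)$ as a lower bound.
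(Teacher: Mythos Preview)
Your proof is correct. The handling of the while-loop termination and of the finite-stopping case is essentially the same as in the paper, and so is the final optimality argument for the cluster point. The difference lies in how you obtain feasibility $\psi^M(\bar x)\le\varepsilon$ in the non-terminating case.

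The paper proceeds geometrically: it lifts the iterates to the pairs $(x_k,\psi_k^M(x_k))\in\mathbb{R}^{n+1}$, introduces the half-space $H(y)=\{(x,\eta):\langle G(y),x-y\rangle\le\eta\}$, observes that $(x_k,\psi_k^M(x_k))\notin H(y_{k+1})$ while $(x_j,\psi_j^M(x_j))\in H(y_{k+1})$ for $j\ge k+1$, and then uses the point--hyperplane distance formula to bound below the distance between successive lifted iterates by a constant times $(\psi^M(x_k)-\varepsilon-\delta_k)^+$. Since the lifted iterates are bounded, this residual must vanish along any convergent subsequence.

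Your argument bypasses the lifting and the distance formula entirely. You extract a further subsequence so that both $x_{k_j}$ and $y_{k_j+1}$ converge, then sandwich $\langle G(\bar y),\bar x-\bar y\rangle$ from below by $\psi^M(\bar x)$ (via the cut rule and continuity) and from above by $\varepsilon$ (via the inclusion $y_{k_j+1}\in B_{k_{j'}}$ for $j'>j$ and the exit condition $\psi_{k_{j'}}^M(x_{k_{j'}})\le\varepsilon$, followed by a two-stage limit). This is more elementary---it uses nothing beyond compactness, continuity, and the order structure of the cut inequalities---and arguably exposes the mechanism more transparently: the new cut $y_{k+1}$ certifies $\psi^M(x_k)$ approximately from below, while all later iterates lie on the $\varepsilon$-side of that same cut. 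The paper's half-space argument packages the same information into a single distance estimate, which is somewhat more quantitative (it relates directly to how far apart successive iterates are) and is in the spirit of classical cutting-plane convergence proofs.
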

\begin{proof}
        Theorem \ref{Uniform_exact_penalization} guarantees that the while loop (lines $4$-$7$) terminates in a finite number of steps. Suppose the algorithm stops at step $k$, then we have the following chain of inequalities
    \begin{align*}
        f(x_k)&=\min\{f(x) + \rho_k (\psi_k^M(x)-\varepsilon)^+ \ | \ x \in C\} \\
        &\le \min\{f(x) \ | \   \psi^M_k(x) \le \varepsilon,x \in C\} \\
        &\le\min\{f(x) \ | \   \psi^M(x) \le \varepsilon,x \in C\} \\
        &\le f(x_k),
    \end{align*}
    where the last inequality holds since the stopping criterion guarantees that $x_k$ is feasible for \eqref{OPMYe}. As a consequence, it is also optimal for the latter problem. 

    Suppose the algorithm generates an infinite sequence, which means that  $\psi^M(x_k)>{\varepsilon}$ always holds. Consider the half-space
    \begin{align*}
        H(y):=\{(x,\eta)\in \mathbb{R}^{n+1} \ | \ \langle G(y),x - y\rangle \le \eta\}.
    \end{align*}
    Then  $(x_k,\psi^M_{k}(x_k)) \not \in H(y_{k+1})$ holds, otherwise we would get the contradiction
    \begin{align*}
	\varepsilon< \langle G(y_{k+1}),x_k - y_{k+1}\rangle \le \psi^M_k(x_k)\le \varepsilon,
	\end{align*}
    where the first inequality follows from the condition at line $12$ and the last from the stopping condition of the loop at lines $4$-$7$. 
    Since $(x_j,\psi^M_{j}(x_j)) \in H(y_{k+1})$ with $j\ge k+1$ follows from the definition of $\psi^M_{k+1}$, then we have 
    \begin{align*}
	\text{dist}\big ((x_k,\psi^M_{k}(x_k)),(x_j,\psi^M_{j}(x_j))\big)&\ge \text{dist}\big((x_k,\psi^M_{k}(x_k)),H(y_{k+1})\big),
    \end{align*}
    where dist denotes the Euclidean distance. We have the following lower estimates of the above distance 
    \begin{align*}
	\text{dist}\big((x_k,\psi^M_{k}(x_k)),H(y_{k+1})\big)
	&=\dfrac{|\langle G(y_{k+1}),y_{k+1}-x_k\rangle + \psi^M_{k}(x_k) |}{\sqrt{||G(y_{k+1})||^2 + 1}}\\
	&\ge\dfrac{1}{\bar{G}'}{|\langle G(y_{k+1}),y_{k+1} -x_k\rangle + \varepsilon + \psi^M_{k}(x_k)-\varepsilon |}\\
    %&\le d((x_k,\psi^M_{k}(x_k)),(x_j,\psi^M_{j}(x_j)))\\
    &=\dfrac{1}{\bar{G}'}\big(|\langle G(y_{k+1}),y_{k+1} -x_k\rangle + \varepsilon| +| \psi^M_{k}(x_k) -\varepsilon|\big)\\
	&\ge \dfrac{1}{\bar{G}'}\big((\psi^M(x_k)-\varepsilon - \delta_k)^+\big),
	\end{align*}
    where $\bar{G}'$ is the maximum of $\sqrt{G(y)^2+1}$ over $C$. The equality in the above chain is provided by the formula of the distance of a point from \textcolor{black}{a} hyperplane, the second equality  from splitting the absolute value of two non positive terms, while  the last inequality  from the choice of $y_{k+1}$ at line $12$. 

    Without loss of generality, we can assume $x_k\rightarrow x_\infty$ for some cluster point $x_\infty \in C$. Taking the limit as $j\rightarrow +\infty$ in the above final estimate  leads to:
    \begin{align*}
	\bar{G}'\text{dist}\big((x_k,\psi^M_{k}(x_k)),(x_\infty,\psi^M_{\infty}(x_\infty)\big)&\ge 
	(\psi^M(x_k) -\varepsilon- \delta_k)^+.
	\end{align*}
     Taking the limit as  $k \rightarrow +\infty$, we get $   (\psi^M(x_\infty) -\varepsilon)^+ \le 0$ which means that $x_\infty$ is feasible for \eqref{OPMYe}. Given any optimal solution  $x^*$ to \eqref{OPMYe}, we have
	$$f(x_{k}) \le f(x^*) \le f(x_\infty),$$
    where the left inequality holds since all the values $f(x_k)$ provide a lower bound to the optimal value of \eqref{OPMYe}, as shown at the beginning of the proof. Taking the limit, we get $f(x^*) = f(x_\infty)$, hence $x_\infty $ is an optimal solution to \eqref{OPMYe}.

\end{proof}

Algorithm \ref{alg1} is a viable option when   the evaluation of the  Minty gap function is possible (for instance, when $G$ is affine) since new cuts are computed using this function. If this is not the case, we can rely on the  Stampacchia gap function  to compute new cuts when the operator $G$ is Lipschitz.

\begin{proposition}\label{Lip}
    Assume \textbf{(A1)} and \textbf{(A3)} hold true and $G$ is  $L$-Lipschitz continuous. Let $D$ be  the diameter of the set $C$ and consider    $0<\varepsilon \le D^2L$ and
	 $x \in C$. If $y \in C$ satisfies
	 $$\langle G(x),x-y \rangle> 2D\sqrt{L \varepsilon },$$
	then there exists $\lambda \in [0,1]$ such that
	$$\langle G(z(\lambda)),x-z(\lambda) \rangle>\varepsilon,$$
	where $z(\lambda)=x + \lambda(y-x)$. In particular, $\bar{\lambda}=\dfrac{\sqrt{\varepsilon}}{D\sqrt{L}}$ guarantees  $z(\bar{\lambda})$ to satisfy the above inequality.  
	 
\end{proposition}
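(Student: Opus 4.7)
The plan is to reduce the claim to an elementary one-dimensional quadratic estimate in the step-length $\lambda$. Writing $z(\lambda) - x = \lambda(y-x)$ immediately gives
\begin{equation*}
\langle G(z(\lambda)), x - z(\lambda)\rangle = \lambda\,\langle G(z(\lambda)), x - y\rangle,
\end{equation*}
so it suffices to lower-bound $\langle G(z(\lambda)), x - y\rangle$. To exploit the hypothesis on $\langle G(x), x-y\rangle$ I would insert and subtract $G(x)$:
\begin{equation*}
\langle G(z(\lambda)), x - y\rangle = \langle G(x), x - y\rangle + \langle G(z(\lambda)) - G(x), x - y\rangle.
\end{equation*}

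For the second summand, Cauchy--Schwarz together with the $L$-Lipschitz continuity of $G$ and the two length bounds $\|z(\lambda)-x\| = \lambda\|y-x\|$ and $\|y-x\|\le D$ yields
\begin{equation*}
|\langle G(z(\lambda)) - G(x), x-y\rangle| \le L\lambda\|y-x\|^2 \le L\lambda D^2.
\end{equation*}
Combining this with the strict hypothesis $\langle G(x), x-y\rangle > 2D\sqrt{L\varepsilon}$ and multiplying the resulting strict inequality by $\lambda>0$, I would arrive at the quadratic lower bound
\begin{equation*}
\langle G(z(\lambda)), x - z(\lambda)\rangle > \lambda\bigl(2D\sqrt{L\varepsilon} - L\lambda D^2\bigr).
\end{equation*}

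It then remains to choose $\lambda$ so that the right-hand side is at least $\varepsilon$. The natural candidate is the vertex of the quadratic $L D^2 \lambda^2 - 2D\sqrt{L\varepsilon}\,\lambda + \varepsilon$, which is a perfect square with its unique zero at $\bar\lambda = \sqrt{\varepsilon}/(D\sqrt{L})$; the assumption $\varepsilon\le D^2L$ guarantees $\bar\lambda\in[0,1]$, so $z(\bar\lambda)$ lies on the segment $[x,y]\subseteq C$. A direct substitution gives $L\bar\lambda D^2 = D\sqrt{L\varepsilon}$ and $\bar\lambda\cdot D\sqrt{L\varepsilon} = \varepsilon$, so the lower bound evaluates to exactly $\varepsilon$ at $\bar\lambda$, and the strict inequality $\langle G(z(\bar\lambda)), x - z(\bar\lambda)\rangle > \varepsilon$ is inherited from the hypothesis.

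I do not foresee a genuine obstacle: the argument is essentially a sharpened Lipschitz estimate along the segment $[x,y]$. The only point that requires care is keeping track of when strict inequalities are preserved, so as to deliver $> \varepsilon$ rather than $\ge \varepsilon$, and the verification that $\bar\lambda\in[0,1]$ under the standing assumption $\varepsilon\le D^2 L$.
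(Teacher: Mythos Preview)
Your argument is correct and essentially identical to the paper's: both add and subtract $G(x)$, apply Cauchy--Schwarz with the Lipschitz bound to obtain the quadratic lower bound $\langle G(z(\lambda)),x-z(\lambda)\rangle > 2\lambda D\sqrt{L\varepsilon} - L\lambda^2 D^2$, and then plug in $\bar\lambda=\sqrt{\varepsilon}/(D\sqrt{L})$ to get the value $\varepsilon$. You are slightly more explicit than the paper in verifying $\bar\lambda\in[0,1]$ from the hypothesis $\varepsilon\le D^2L$, which is a welcome touch.
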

\begin{proof}
The  following chain of inequalities:
 \begin{align*}
 	\langle G(z(\lambda)),x-z(\lambda) \rangle&= \langle G(z(\lambda)) - G(x),x-z(\lambda) \rangle + \langle G(x),x-z(\lambda)  \rangle\\
 	&\ge -\lambda\| G(z(\lambda)) - G(x)\|\cdot\|y -x\| +\lambda \langle G(x),x-y \rangle \\
 	&> -L\lambda^2D^2 +2\lambda D \sqrt{L \varepsilon },
 \end{align*}
 holds thanks to the Cauchy-Schwarz inequality and the assumptions.
 The quantity $-L\lambda^2D^2 - 2\lambda D \sqrt{L \varepsilon }$  achieves the maximum at $ \bar{\lambda}$, 
which yields
\begin{align*}
    \langle G(z(\bar{\lambda})),x-z(\bar{\lambda}) \rangle >- \varepsilon + 2\varepsilon =\varepsilon,
\end{align*}
proving the statement.
\end{proof}

While Proposition \ref{Lip} gives us  a direct way to devise an algorithm that only employs  the Stampacchia gap function   to compute new cuts, it comes at the cost of changing the stopping criterion of Algorithm \ref{alg1} which entails a worse degree of  final inexactness. On the other hand, the new cut  in Algorithm \ref{alg2} is simply given by $z(\bar{\lambda})$ in Proposition \ref{Lip}.

\begin{algorithm}
\caption{}\label{alg2}
\begin{algorithmic}[1]
\State \textbf{input:} $y_0 \in  C$, $\rho_0>0$, $\sigma>1$, $\delta_k>0$ and $ \delta_k\downarrow0$.
\For{$k=0,1,\dots$}
\State $z \in \arg\min\{f(x) + \rho_k (\psi_k^M(x)-\varepsilon)^+\ | \ x \in C\}$
\While{$\psi^{M}_{k} (z)>\varepsilon$}
    \State{$ \rho_{k}=\sigma \rho_k$
    \State$z \in \arg\min\{f(x) + \rho_k (\psi_k^M(x)-\varepsilon)^+\ | \ x \in C\}$}
\EndWhile
\State $x_k=z$
\If{$\psi^S(x_k)\le 2D\sqrt{L \varepsilon } $}
\State {\textbf{stop}}

\Else
\State Find $\bar{y}_k$ such that:
				$
				\langle G(x_k),x_k - \bar{y}_k \rangle - 2D\sqrt{L \varepsilon }> (\psi^S(x_k)-{2D\sqrt{L \varepsilon }} - \delta_k)^+ 
				$\;
\State $y_{k+1}=x_k +\dfrac{\sqrt{\varepsilon}}{D\sqrt{L}}(\bar{y}_k -x_k)$
\State $\rho_{k+1}=\rho_k$
\EndIf    
\EndFor
\end{algorithmic}
\end{algorithm}

We give an alternative  version of the Algorithm \ref{alg2}, which doesn't require the knowledge of the diameter $D$ and the Lipschitz constant $L$.
 As in the previous algorithm, the new cut is  a convex combination of the last  optimal solution of the penalized problem and the  corresponding  maximizer  of the Stampacchia gap function. The difference lies in the fact that the parameter of the convex combination is not fixed but  computed with an exact line search.
This step (line $10$) is the  challenging  part of Algorithm \ref{alg3}, since we have to solve a one-dimensional  maximization problem which is not necessarily concave.

\begin{algorithm}
\caption{}\label{alg3}
\begin{algorithmic}[1]
\State \textbf{input:} $y_0 \in  C$, $\rho_0>0$, $\sigma>1$.
\For{$k=0,1,\dots$}
\State $z \in \arg\min\{f(x) + \rho_k (\psi_k^M(x)-\varepsilon)^+\ | \ x \in C\}$
\While{$\psi^{M}_{k} (z)>\varepsilon$}
    \State{$ \rho_{k}=\sigma\rho_k$
    \State$z \in \arg\min\{f(x) + \rho_k (\psi_k^M(x)-\varepsilon)^+\ | \ x \in C\}$}
\EndWhile
\State $x_k=z$
\State $\bar{y}_k\in \arg\max \{\langle G(x_k),x_k-y\rangle\ | \ y\in C \}$
\State $\bar{\lambda}\in \arg\max\{\lambda \langle G(x_k+\lambda(\bar{y}_k-x_k)),x_k-\bar{y}_k\rangle \ | \ \lambda\in[0,1]\}$
\State  $y_{k+1}=x_k+\bar{\lambda}(\bar{y}_k-x_k) $
\State $\rho_{k+1}=\rho_k$
\If{$\langle G(y_{k+1}),x_k-y_{k+1}\rangle\le \varepsilon$}
\State {\textbf{stop}}
\EndIf    
\EndFor
\end{algorithmic}
\end{algorithm}

As a direct consequence of the proof of  Proposition \ref{Lip}, we get that the following implication holds true for all $x \in C$:
\begin{align*}
    \psi^S(x) > 2D\sqrt{L \varepsilon } \implies \psi^M(x)> \varepsilon.
\end{align*}
Therefore, we gain the following inclusion:
\begin{align*}
\{x \in C \ | \  \psi^M(x)\le \varepsilon\}\subseteq \{x \in C \ | \ \psi^S(x)\le 2D\sqrt{L\varepsilon} \}.
\end{align*} 
While  Algorithm \ref{alg1}  guarantees convergence  to optimal solutions to \eqref{OPMYe}, this is not true for  Algorithms \ref{alg2} and  \ref{alg3}. In fact,   we  are guaranteed to find points which are inside the right hand side of the inclusion, as  stated  in the following theorem. 
\begin{theorem}\label{Theorem_realistic}
    Assume \textbf{(A1)-(A3)} hold true and    $G$ is also $L$-Lipschitz \textcolor{black}{continuous}. Let  $\{x_k\}_{k \in \mathbb{N}}$ be a sequence generated by  Algorithm \ref{alg2} or \ref{alg3}. If the algorithm stops at iteration $k$, then $x_{k}$ satisfies:
    \begin{align*}
    x_{k}\in \arg\min\{f(x)\ | \ x \in C, \ \psi^M_{k}(x) \le \varepsilon\} \bigcap \{x\in C\ | \ \psi^S(x)\le {2D\sqrt{L \varepsilon }}\}.
	\end{align*}
	 Otherwise every   cluster point $x_\infty$ of the  sequence satisfies:
	\begin{align*}
    x_{\infty}\in \arg\min\{f(x)\ | \ x \in C, \ \psi^M_{\infty}(x) \le \varepsilon\} \bigcap \{x\in C\ | \ \psi^S(x)\le {2D\sqrt{L \varepsilon }}\}.
	\end{align*}
\end{theorem}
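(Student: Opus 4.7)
My plan is to reuse the template of the proof of Theorem \ref{Ideal_theorem}, adapting it to the Stampacchia-based stopping and cut-selection rules of Algorithms \ref{alg2} and \ref{alg3}. The translation tool between $\psi^S$ and $\psi^M$ is Proposition \ref{Lip}: it converts a Stampacchia condition $\langle G(x_k),x_k-\bar{y}_k\rangle>2D\sqrt{L\varepsilon}$ into the Minty-type condition $\langle G(y_{k+1}),x_k-y_{k+1}\rangle>\varepsilon$ at the specific point $y_{k+1}=z(\bar{\lambda})$ with $\bar{\lambda}=\sqrt{\varepsilon}/(D\sqrt{L})\in[0,1]$. The first step, common to both algorithms, is to invoke Theorem \ref{Uniform_exact_penalization}: the inner while loop terminates in finitely many updates of $\rho_k$, and upon exit $\rho_k$ exceeds the uniform exact-penalty threshold, so that Proposition \ref{proprhoB} gives $x_k\in\arg\min\{f(x)\mid x\in C,\ \psi^M_k(x)\le\varepsilon\}$ together with $\psi^M_k(x_k)\le\varepsilon$.

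The stopping branch is then immediate. For Algorithm \ref{alg2} the test at line 9 literally reads $\psi^S(x_k)\le 2D\sqrt{L\varepsilon}$, so $x_k$ lies in both sets of the intersection. For Algorithm \ref{alg3} I would argue by contradiction: were $\psi^S(x_k)>2D\sqrt{L\varepsilon}$, then since $\bar{y}_k$ attains $\psi^S(x_k)$, Proposition \ref{Lip} delivers $\langle G(z(\bar{\lambda})),x_k-z(\bar{\lambda}))\rangle>\varepsilon$ at $\bar{\lambda}=\sqrt{\varepsilon}/(D\sqrt{L})$; but the line search at line 10 maximises exactly this quantity over $[0,1]$, so $\langle G(y_{k+1}),x_k-y_{k+1}\rangle>\varepsilon$, contradicting the stopping test at line 13.

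For the non-stopping branch I reproduce the halfspace argument of Theorem \ref{Ideal_theorem}. The halfspace $H(y_{k+1})$ strictly excludes $(x_k,\psi^M_k(x_k))$ --- since $\langle G(y_{k+1}),x_k-y_{k+1}\rangle>\varepsilon\ge\psi^M_k(x_k)$ --- while every future iterate $(x_j,\psi^M_j(x_j))$ with $j\ge k+1$ lies inside $H(y_{k+1})$ by the very definition of $\psi^M_j$. The distance-to-halfspace formula then bounds the Euclidean distance between these points from below, up to the constant $\bar{G}'$, by $\langle G(y_{k+1}),x_k-y_{k+1}\rangle-\psi^M_k(x_k)$. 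For Algorithm \ref{alg2}, the condition at line 12 combined with the estimate in the proof of Proposition \ref{Lip} gives
\begin{equation*}
\langle G(y_{k+1}),x_k-y_{k+1}\rangle-\psi^M_k(x_k)\ \ge\ \bar{\lambda}\bigl(\psi^S(x_k)-2D\sqrt{L\varepsilon}-\delta_k\bigr)^+,
\end{equation*}
while for Algorithm \ref{alg3} the line search attains at least the Proposition \ref{Lip} value at $\bar{\lambda}$, yielding the same inequality without the $\delta_k$ correction. Letting $j\to+\infty$ along a subsequence with $x_{k_m}\to x_\infty$ and then $k\to+\infty$, the right-hand side must vanish, and continuity of $\psi^S$ on $C$ gives $\psi^S(x_\infty)\le 2D\sqrt{L\varepsilon}$.

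The final step is to verify $x_\infty\in\arg\min\{f(x)\mid x\in C,\ \psi^M_\infty(x)\le\varepsilon\}$. Feasibility $\psi^M_\infty(x_\infty)\le\varepsilon$ follows by passing each element of the sup to the limit: for fixed $i$, $\langle G(y_i),x_\infty-y_i\rangle=\lim_k\langle G(y_i),x_k-y_i\rangle\le\limsup_k\psi^M_k(x_k)\le\varepsilon$. Optimality comes from the exact penalization already established: for any $x\in C$ with $\psi^M_\infty(x)\le\varepsilon$ one also has $\psi^M_k(x)\le\varepsilon$, so $f(x_k)\le f(x)$, and letting $k\to+\infty$ delivers $f(x_\infty)\le f(x)$. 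The main technical obstacle I anticipate is justifying $\psi^M_j(x_j)\to\psi^M_\infty(x_\infty)$ as $j\to+\infty$, which is needed to push the cutting-plane distance bound through both limits; the two key facts are the uniform Lipschitz continuity of $\{\psi^M_j\}_j$ (their subgradients lie in $B(0,\bar{G})$ uniformly in $j$) and the monotone pointwise convergence $\psi^M_j(x_\infty)\uparrow\psi^M_\infty(x_\infty)$.
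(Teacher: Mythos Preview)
Your proposal is correct and follows exactly the route the paper intends (it omits the proof, pointing to Theorem~\ref{Ideal_theorem}): while-loop termination via Theorem~\ref{Uniform_exact_penalization}, the half-space separation argument with the distance lower-bounded through the estimate in the proof of Proposition~\ref{Lip}, and the feasibility/optimality of the cluster point via the chain of inequalities. One small correction: the while loop may exit with $\rho_k$ \emph{below} the uniform threshold, so you cannot invoke Proposition~\ref{proprhoB} to conclude $x_k\in\arg\min\{f(x)\mid x\in C,\ \psi^M_k(x)\le\varepsilon\}$; instead use the direct chain $f(x_k)=\min_C\{f+\rho_k(\psi^M_k-\varepsilon)^+\}\le\min\{f(x)\mid \psi^M_k(x)\le\varepsilon\}$ together with the exit condition $\psi^M_k(x_k)\le\varepsilon$, exactly as in the proof of Theorem~\ref{Ideal_theorem}.
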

The  proof follows in the footsteps of  Theorem \ref{Ideal_theorem}, with slight modifications to account for the new stopping criterion, hence it is omitted.

Theorem \ref{Theorem_realistic}  provides bounds for  the optimal values of two different inexact versions of \eqref{OPVI}. In fact,
the final point or any cluster point $\tilde{x}_{\varepsilon}$  of the algorithms satisfies:
\begin{align*}
 \min\{f(x)\ | \ \psi^S(x)\le{2D\sqrt{L \varepsilon }} \}
 \le f(\tilde{x}_{\varepsilon}) \le \min\{f(x)\ | \ \psi^M(x)\le \varepsilon\}.
\end{align*}
In turn, inclusion \eqref{contains} implies the following bounds:
\begin{align*}
 \min\{f(x)\ | \ \psi^S(x)\le{2D\sqrt{L \varepsilon }} \}
 \le f(\tilde{x}_{\varepsilon}) \le \min\{f(x)\ | \ \psi^S(x)\le \varepsilon\}.
\end{align*}
The above  inequalities can be used to show that $\tilde{x}_{\varepsilon}$
%the optimal solutions 
behave well when the inexactness goes to zero.

\begin{proposition}
Assume \textbf{(A1)-(A3)} hold true and  consider a sequence $\{\tilde{x}_{\varepsilon_k} \}_{k}$ with $\varepsilon_k \downarrow0$ such that:
   \begin{align*}
   \min\{f(x)\ | \ \psi^S(x)\le{2D\sqrt{L\varepsilon_k }} \}
 \le f(\tilde{x}_{\varepsilon_k}) \le \min\{f(x)\ | \ \psi^S(x)\le \varepsilon_k\}.
\end{align*}
Then every cluster point of the sequence  is an optimal solution to \eqref{OPVI}.
\end{proposition}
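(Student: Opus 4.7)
The plan is to take an arbitrary cluster point $\bar{x}$ of the sequence $\{\tilde{x}_{\varepsilon_k}\}_k$ and prove it is both feasible and optimal for \eqref{OPVI}. Passing to a convergent subsequence (still indexed by $k$), I would write $\tilde{x}_{\varepsilon_k}\to\bar{x}$ with $\bar{x}\in C$ by compactness. Because the sequence is drawn from the algorithms analysed just above, Theorem \ref{Theorem_realistic} supplies the companion feasibility bound $\psi^S(\tilde{x}_{\varepsilon_k})\le 2D\sqrt{L\varepsilon_k}$, so continuity of $\psi^S$ on $C$ (inherited from the continuity of $G$ and the compactness of $C$) yields $\psi^S(\bar{x})\le 0$. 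By Proposition \ref{reformulation} this means $\bar{x}$ solves \eqref{VI} and is therefore feasible for \eqref{OPVI}.

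The core of the argument is then to squeeze $f(\tilde{x}_{\varepsilon_k})$ between two quantities that both tend to the optimal value of \eqref{OPVI}, which I will denote $f(x^*)$ for a fixed optimal solution $x^*$. The upper bound is immediate: since $\psi^S(x^*)=0\le\varepsilon_k$, the point $x^*$ is feasible for the right-hand minimisation in the hypothesis, so the right inequality gives $f(\tilde{x}_{\varepsilon_k})\le f(x^*)$.

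For the matching lower bound, I would establish continuity at $0^+$ of the value function $v(\eta):=\min\{f(x)\mid x\in C,\;\psi^S(x)\le\eta\}$. Monotonicity in $\eta$ gives $v(\eta)\le v(0)=f(x^*)$ for every $\eta\ge 0$; for the reverse inequality I would pick minimizers $x_{\eta_m}$ for a sequence $\eta_m\downarrow 0$, extract a convergent subsequence using the compactness of $C$, and use the continuity of $\psi^S$ and $f$ to show that any cluster point is feasible for \eqref{OPVI} and therefore has $f$-value at least $f(x^*)$. This delivers $v(\eta)\to f(x^*)$ as $\eta\downarrow 0$, and applying it with $\eta=2D\sqrt{L\varepsilon_k}$ together with the left inequality in the hypothesis yields $\liminf_k f(\tilde{x}_{\varepsilon_k})\ge f(x^*)$.

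Combining the two bounds gives $f(\tilde{x}_{\varepsilon_k})\to f(x^*)$, so $f(\bar{x})=f(x^*)$ by continuity of $f$; together with the feasibility established in the first step this proves that $\bar{x}$ is optimal for \eqref{OPVI}. The main technical point will be the continuity of the value function $v$ at $\eta=0$: this is the only step that goes beyond direct insertion of the hypotheses, and it is essential because the left inequality on its own does not expose $\bar{x}$ as a near-optimal feasible point.
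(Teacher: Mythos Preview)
Your proof is correct, but it does more work than necessary and your closing remark that the value-function continuity step is ``essential'' is not accurate. The paper's argument is shorter: once you have established feasibility of $\bar{x}$ (exactly as you do, via the companion bound $\psi^S(\tilde{x}_{\varepsilon_k})\le 2D\sqrt{L\varepsilon_k}$ from Theorem~\ref{Theorem_realistic}), optimality follows immediately from the upper inequality alone. Indeed, the right-hand bound gives $f(\tilde{x}_{\varepsilon_k})\le f(x^*)$ and hence $f(\bar{x})\le f(x^*)$ by continuity; since $\bar{x}$ is feasible for \eqref{OPVI} and $x^*$ is optimal, the reverse inequality $f(\bar{x})\ge f(x^*)$ is automatic. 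The left inequality in the hypothesis and the continuity of the value function $v(\eta)$ at $0$ are therefore never needed.

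Your route---proving $v(\eta)\to f(x^*)$ and squeezing $f(\tilde{x}_{\varepsilon_k})$ between two quantities converging to $f(x^*)$---is a valid alternative and has the minor advantage of showing convergence of the whole sequence of values $f(\tilde{x}_{\varepsilon_k})$ (not just along the subsequence). But it is strictly longer, and the extra machinery is redundant once feasibility of $\bar{x}$ is in hand.
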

\begin{proof}
 Let  $\bar{x}$  be a cluster point of the sequence and $x^*$ an optimal solution to \eqref{OPVI}. The feasible region of \eqref{OPVI} is contained in the feasible region of ($\varepsilon_k$-OPVI), that is 
    \begin{align*}
        \{x \in C\ | \ \psi^S(x)\le 0\} \subseteq\{x \in C\ | \ \psi^S(x)\le \varepsilon_k\},
        \end{align*}
    so that $f(x^*) \ge f(\tilde{x}_{\varepsilon_k})$ holds. Without loss of generality, we can assume $\tilde{x}_{\varepsilon_k} \rightarrow \bar{x}$, so that   $f(x^*)\ge f(\bar{x})$ holds thanks to continuity of $f$. Moreover, we have
        \begin{align*}
           0\ge \lim_{j \rightarrow +\infty} \psi^S(\tilde{x}_{\varepsilon_k})- {2D\sqrt{L{\varepsilon_k} }}= \psi^S(\bar{x}),
        \end{align*}
        which implies that $x^*$ is feasible, hence optimal for \eqref{OPVI}.
\end{proof}

The final value of the Stampacchia gap function measures how much the initial degree inexactness has been worsened  by the approximations  in the algorithms.
In fact, Theorem \ref{Theorem_realistic} guarantees that  the final inexactness is at most $2D\sqrt{L \varepsilon }$, so that: 
\begin{align*}
    \psi^S(\tilde{x}_{\varepsilon})\in[0,2D\sqrt{L \varepsilon }].
\end{align*}
In the next section  the results of some preliminary numerical tests are reported which aim at analyzing the gap between the actual  final inexactness $\tilde{\varepsilon}=\psi^S(\tilde{x}_{\varepsilon}) $ and  the  theoretical bound. 

\section{Numerical results}\label{sec5}

All the numerical tests have been run relying on Algorithm \ref{alg3} since  we only considered problems with non affine monotone operators. First,  we considered  a  problem where a quadratic function is minimized over the set of solution to a variational inequality in order to check how much the final inexactness is better than the    theoretical bound.
Afterwards,  we considered Cournot competition over a network aiming at choosing a equilibrium that   maximizes the social welfare, that is, the sum of the profits of  all the firms. Indeed, \eqref{OPVI} provides a suitable model for  this equilibrium selection problem. The performance of Algorithm 3 on this problem is compared  with the recent algorithms from \cite{SaYu25,KaFa21}.

All the numerical tests have been run on a laptop with an Intel(R) Core(TM) i7-8750H CPU @ 2.20GHz   2.21 GHz with windows $64$ bit,  using \textcolor{black}{Python} $3.8$ and \textcolor{black}{Gurobi Optimizer} $11$. 
We used the DIRECT algorithm \cite{direct} to compute the  maximum  $\bar{\lambda}$ at line $10$.
%We implemented a golden search algorithm (see for instance \cite{bazaraa2006nonlinear}) to compute the  maximum  $\bar{\lambda}$ at line $10$.
Also,  we used the   \textcolor{black}{Gurobi Optimizer} to solve the convex minimization problem at line $3$ and $6$.

\textbf{Problem 1:} Consider \eqref{OPVI} where $f$ is quadratic and convex, i.e. $f(x)=\langle (x-u),Q(x-u) \rangle$ with $u \in \mathbb{R}^n$ and $Q \in \mathbb{R}^{n \times n}$  a symmetric positive definite matrix. The operator $G$ is the  sum of one linear and one \textcolor{black}{nonlinear} term, of the form
\begin{align*}
    G(\textbf{x})=\begin{pmatrix}
        M & \textbf{0}_{n-l\times l}\\
        \textbf{0}_{l\times n- l} & \textbf{0}_{l\times l}
    \end{pmatrix} \textbf{x} +  \begin{pmatrix}
        \textbf{b}\\
        \textbf{0}_{l}
    \end{pmatrix} +  
    \begin{pmatrix}
    \alpha_1 e^{\beta_1x^1}\\
    \vdots\\
    \alpha_{n-l} e^{\beta_{n-l}x^{n-l}}\\
    \textbf{0}_{l}
    \end{pmatrix},
\end{align*}
in order to guarantee a lower bound $l$ on the  dimension of the  solution set  of the lower problem.
The matrix $M \in \mathbb{R}^{(n-l)\times (n-l)}$ is non symmetric and positive definite, while 
 $\alpha_i$ and $\beta_i$ are taken positive so that $G$ is  monotone.  
 
 The coefficients $M,\alpha,\beta,b$ were randomly generated and scaled to control  the Lipschitz constant $L$ of $G$.  The starting cutting point  $y_0$ was taken randomly inside the set $C$. Three types of sets $C$ were considered: the unitary cube,  sphere 
 and  simplex.
 The parameter $\sigma$, which regulates the rate at which the penalty parameter grows, had a value of  $1.2$, while the starting penalty parameter $\rho_0$  was set at $1$. 
 
 Each table reports the average results on $100$ random instances. To test the speed, we measured the computational time in seconds, the number of times $\rho$ increased and the number of cuts that have been added. In order  to test the accuracy, we measured the value $\tilde{\varepsilon}$ of the Stampacchia gap function at the final point of the algorithm and  compared it  with the theoretical bound $E=2D\sqrt{L\varepsilon}$.

\begin{table}[htbp]
    \centering
    \resizebox{10cm}{!}{%
    \begin{tabular}{cccccccc}
    \toprule
        $C$ &$\|b\|$&time (sec)& $\rho$ incr.&  cuts  &  $\tilde{\varepsilon}$ & $E$& $\tilde{\varepsilon}/E$\\
    \midrule[0.8pt]
    \multirow{5.4}{*}{Cube} &0 &0.4356&19.96&8.33&0.0102&6.3246&0.0016\\
    \cmidrule{2-8}
    &$\sqrt{n}$ &1.1866&23.25&24.51&0.063&6.3246&0.01\\
    \cmidrule{2-8}
    &$\sqrt{2n}$ &5.6749&25.27&105.64&0.4592&6.3246&0.0726\\
    \cmidrule{2-8}
     &$\sqrt{3n}$ &7.4718&28.36&150.31&0.6731&6.3246&0.1064\\
    \midrule
    
    \multirow{5.4}{*}{Sphere} &$0$  &0.3254&32.62&11.7&0.0235&1.7889&0.0132\\
    
    \cmidrule{2-8}
    &$\sqrt{n}$ &0.315&32.52&10.75&0.0222&1.7889&0.0124\\
    \cmidrule{2-8}
     &$\sqrt{2n}$ &0.3897&32.47&9.68&0.0203&1.7889&0.0114\\
    \cmidrule{2-8}
     & $\sqrt{3n}$ &0.3173&32.26&8.13&0.0181&1.7889&0.0101\\
    \midrule
    \multirow{5.4}{*}{Simplex}  &$0$ &0.0752&11.73&2.97&0.0093&1.2649&0.0073\\
    \cmidrule{2-8}
    &$\sqrt{n}$ &0.1996&12.05&14.16&0.0602&1.2649&0.0476\\

    \cmidrule{2-8}
    &$\sqrt{2n}$ &0.731&23.89&44.85&0.3218&1.2649&0.2544\\

    \cmidrule{2-8}
    &$\sqrt{3n}$ &0.6899&27.57&48.68&0.3548&1.2649&0.2805\\

    \bottomrule
    \end{tabular}}
    \caption{Results for different values of $ \|b\|$, $n=50$, $l=10$, $L=20$, $\varepsilon=0.01$.}
    \label{tab1}
\end{table}
Table \ref{tab1} focuses on the sensitivity with respect  to the coefficient $b$ which have been  uniformly drawn  from  a sphere of a given radius.  The algorithm performed worse as $ \|b\|$ grew  on the cube and the simplex, while the opposite  happened  on the sphere.

\begin{table}[htbp]
    \centering
    \resizebox{10cm}{!}{%
    \begin{tabular}{cccccccc}
    \toprule
        $C$ &$L$&time (sec)& $\rho$ incr.&  cuts  &  $\tilde{\varepsilon}$ & $E$& $\tilde{\varepsilon}/E$\\
    \midrule[0.8pt]
    \multirow{4.2}{*}{Cube} & $20$ &4.9364&10.41&10.75&0.5407&6.3246&0.0855\\
    \cmidrule{2-8}
    &$50$ &20.5215&7.12&64.36&0.8751&10.0&0.0875\\
    \cmidrule{2-8}
    &$100$ &88.2439&4.29&608.14&1.2529&14.1421&0.0886\\
    \midrule
    
    \multirow{4.2}{*}{Sphere}&$20$  &0.2113&15.92&8.89&0.0193&1.7889&0.0108\\
    
    \cmidrule{2-8}
    &$50$ &0.7579&14.12&48.18&0.0732&2.8284&0.0259\\
    \cmidrule{2-8}
    &$100$ &8.2735&12.93&207.48&0.4819&4.0&0.1205\\
    \midrule
    \multirow{4.2}{*}{Simplex}&$20$ &0.4068&11.0&39.18&0.3243&1.2649&0.2564\\
    \cmidrule{2-8}
    &$50$ &0.9597&9.74&64.63&0.5163&2.0&0.2581\\
    \cmidrule{2-8}
    &$100$ &1.3257&7.27&87.63&0.7464&2.8284&0.2639\\

    \bottomrule
    \end{tabular}}
    \caption{Results for different values of  $L$, $n=50$, $l=10$, $ \|b\|=5\sqrt{6}$, $\varepsilon=0.01$.}
    \label{tab2}
\end{table}

Table \ref{tab2} focuses on the sensitivity with respect to the Lipschitz constant $L$. Though the final inexactness worsen as $L$ increases,  the ratio $\tilde{\varepsilon}/E$ stayed  quite   stable on the cube and the simplex. The number of times the penalty parameter increases went down as $L$ increased, while the number of cuts grew. Notice that the growth of the number of cuts brought in an   increase in computational time.

\begin{table}[htbp!]
    \centering
    \resizebox{10cm}{!}{%
    \begin{tabular}{cccccccc}
    \toprule
        $C$ &$\varepsilon$&time (sec)& $\rho$ incr.&  cuts  &  $\tilde{\varepsilon}$ & $E$& $\tilde{\varepsilon}/E$\\
    \midrule[0.8pt]
    \multirow{5.4}{*}{Cube} &$1$ &1.0865&15.83&27.38&7.6783&63.2456&0.1214\\
    \cmidrule{2-8}
    &$0.1$ &2.3805&20.16&57.28&1.6634&20.0&0.0832\\
    \cmidrule{2-8}
    &$0.01$ &5.6103&25.98&99.66&0.4432&6.3246&0.0701\\
    \cmidrule{2-8}
     &$0.001$ &7.5929&31.4&124.07&0.1217&2.0&0.0609\\
    \midrule
    \multirow{5.4}{*}{Sphere} &$1$  &0.1943&19.39&8.22&2.0095&17.8885&0.1123\\
    \cmidrule{2-8}
    &$0.1$  &0.256&26.05&9.36&0.2052&5.6569&0.0363\\
    \cmidrule{2-8}
    &$0.01$  &0.4113&32.55&9.05&0.0198&1.7889&0.0111\\
    \cmidrule{2-8}
    &$0.001$  &0.4174&38.76&8.9&0.0019&0.5657&0.0034\\
    \midrule
    \multirow{5.4}{*}{Simplex} &$1$ &0.0924&3.59&7.85&3.8881&12.6491&0.3074\\
    \cmidrule{2-8}
    &$0.1$ &0.3064&11.48&29.87&1.141&4.0&0.2853\\
    \cmidrule{2-8}
    &$0.01$ &0.6528&25.56&36.28&0.3256&1.2649&0.2574\\
    \cmidrule{2-8}
    &$0.001$
    &0.9102&32.95&53.75&0.103&0.4&0.2575\\
    \bottomrule
    \end{tabular}}
    \caption{Results for different values $\varepsilon$, $n=50$, $l=10$, $ \|b\|=10$, $L=20$.}
    \label{tab3}
\end{table}

Table \ref{tab3} shows the behavior of the algorithm as $\varepsilon$ decreases towards $0$. As it is expected, the actual inexactness  $\tilde{\varepsilon}$ decreased with $\varepsilon$. Moreover, the ratio $\tilde{\varepsilon}/E$ decreased meaningfully on the sphere and the cube, while it remained stable on the simplex.
\begin{table}[h]
    \centering
    \resizebox{10cm}{!}{%
    \begin{tabular}{cccccccc}
    \toprule
        $C$ &$n$&time (sec)& $\rho$ incr.&  cuts  &  $\tilde{\varepsilon}$ & $E$& $\tilde{\varepsilon}/E$\\
    \midrule[0.8pt]
    \multirow{5.4}{*}{Cube} &$25$ &0.8157&23.36&29.55&0.2408&4.4721&0.0538\\
    \cmidrule{2-8}
     &$50$ &3.7345&25.47&84.29&0.435&6.3246&0.0688\\
    \cmidrule{2-8}
     &$75$ &11.0466&26.11&156.22&0.5451&7.746&0.0704\\
    \cmidrule{2-8}
    &$100$ &26.0887&27.05&209.88&0.6628&8.9443&0.0741\\
    \midrule
    \multirow{5.4}{*}{Sphere}&$25$ &0.2701&30.99&21.59&0.0344&1.7889&0.0192\\
    
   \cmidrule{2-8}
    &$50$ 
    &0.275&32.47&9.15&0.02&1.7889&0.0112\\
   \cmidrule{2-8}
    &$75$ &0.4611&33.23&7.09&0.0167&1.7889&0.0093\\
    \cmidrule{2-8}
    &$100$  &0.8179&33.69&6.77&0.0161&1.7889&0.009\\
    \midrule
    \multirow{5.4}{*}{Simplex}&$25$ &0.1707&23.27&16.96&0.2319&1.2649&0.1834\\
    \cmidrule{2-8}
    &$50$ &0.4307&26.66&35.66&0.3205&1.2649&0.2534\\
    \cmidrule{2-8}
    &$75$ &1.0409&27.46&57.23&0.3463&1.2649&0.2738\\
    \cmidrule{2-8}
    &$100$
    &2.2926&28.04&75.71&0.3655&1.2649&0.289\\
    \bottomrule
    \end{tabular}}
    \caption{ Results for different values $n$, $l=\dfrac{n}{5}$, $ \|b\|=\sqrt{2n}$, $L=20$.}
    \label{tab4}
\end{table}

Table \ref{tab4} shows the behavior of the algorithm as the dimension $n$ changes while keeping the   lower bound $l=n/5$. Notice that the final inexactness $\tilde{\varepsilon}$ increased on the cube while it did not change meaningfully on the sphere and the simplex. This behavior is expected, since each cut ``eliminates" some vertices and the number of vertices increases exponentially for the cube, while it is just $n+1$  for the simplex. Finally, the ratio $\tilde{\varepsilon}/E$ seemed quite stable while depending on the type of problem.

\begin{table}[h]
    \centering
    \resizebox{10cm}{!}{%
    \begin{tabular}{cccccccc}
    \toprule
        $C$ &$\rho_0$&time (sec)& $\rho$ incr.&  cuts  &  $\tilde{\varepsilon}$ & $E$& $\tilde{\varepsilon}/E$\\
    \midrule[0.8pt]
    \multirow{5.4}{*}{Cube}
     
     &$100$ &4.0306&1.83&95.0&0.4335&6.3246&0.0685\\
    
    \cmidrule{2-8}
     &$500$ &3.9787&0.0&92.63&0.4275&6.3246&0.0676\\
    \cmidrule{2-8}
    &$1000$ &4.2294&0.0&97.0&0.4336&6.3246&0.0686\\
    %\cmidrule{2-8}
    %&$1500$&4.3617&0.0&94.56&0.436&6.3246&0.0689\\
    \cmidrule{2-8}
    &$2000$ &5.5133&0.0&94.39&0.4379&6.3246&0.0692\\
    \midrule
    \multirow{5.4}{*}{Sphere}

     &$100$ &0.1604&7.22&9.83&0.0209&1.7889&0.0117\\

   \cmidrule{2-8}
    &$500$ &0.1099&0.0&9.79&0.0209&1.7889&0.0117\\
    \cmidrule{2-8}
    &$1000$  &0.113&0.0&9.78&0.0209&1.7889&0.0117\\
    %\cmidrule{2-8}
    %&$1500$&0.1164&0.0&9.79&0.0209&1.7889&0.0117\\
    \cmidrule{2-8}
    &$2000$  &0.1415&0.0&9.81&0.0209&1.7889&0.0117\\
    \midrule
    \multirow{5.4}{*}{Simplex}
     &$100$ &0.4055&3.82&36.72&0.3217&1.2649&0.2543\\
    \cmidrule{2-8}
    &$500$ &0.3862&0.01&35.61&0.3185&1.2649&0.2518\\
    \cmidrule{2-8}
    &$1000$
    &0.4198&0.0&36.37&0.3186&1.2649&0.2518\\
    %\cmidrule{2-8}
    %&$1500$&0.4321&0.0&36.31&0.3162&1.2649&0.2499\\

     \cmidrule{2-8}
    &$2000$
    &0.6915&0.0&35.85&0.3209&1.2649&0.2537\\
    \bottomrule
    \end{tabular}}
    \caption{ Results for different values $\rho_0$, $n=50,l=10$, $ \|b\|=10$, $L=20$, $\varepsilon=0.01$.}
    \label{tab5}
\end{table}

\textcolor{black}{Table \ref{tab5} reports the behavior of the algorithm  for different choices of the initial penalty parameter $\rho_0$. For initial values  $500$, $1000$ and $2000$, the algorithm  actually becomes akin to a fixed penalty approach, since the penalty parameter almost never increases. On the contrary, small values for $\rho_0$  require to solve  the minimization
problem of Algorithm \ref{alg3}, line 6, many times, incurring in a higher computational burden as shown in Table \ref{tab3}. Moreover, we observe that $\rho_0 = 500$ yields the best runtime and accuracy, suggesting that selecting  very large penalty parameters is unlikely to bring any advantage.} %Indeed, while the exact penalty property holds for all parameters above a certain threshold, further increasing it places  more weight on the penalized term at the expense of the objective function.

All the tests show that the final inexactness is meaningfully lower than the theoretical one, and it amounts to at most  $10\%$ of the theoretical value on the cube and sphere, while at most $30\%$ on the simplex.

{\textbf{Problem 2:} Consider Cournot competition over a network where $N$ firms can produce and sell a single good in different  $J$ locations (see, for instance, \cite{KaFa21,KaSh12}).  The   price at which they sell depends only on the total quantity sold at that location, and supplies can be transported  across the network between locations with negligible transportation costs.
Each firm $ i $ chooses the production level $ y_{ij} $ and the amount of sales $ s_{ij} $ at location $ j $. Therefore, the firm's decision vector is  $ x^{(i)} := (y_i, s_i) \in \mathbb{R}^{2J} $, where  $ y_i := (y_{i1}, \ldots, y_{iJ}) $ and $ s_i := (s_{i1}, \ldots, s_{iJ}) $. 
The firm seeks to maximize its own profit given by
$$
g_i(x^{(i)}, x^{(-i)}) :=   \sum_{j=1}^J s_{ij} \, p_j\left( \sum_{i=1}^N s_{ij} \right) - \sum_{j=1}^J c_{ij}y_{ij},
$$  
where $ c_{ij} $ is the unitary production cost and $
p_j(t) := a_j - b_jt^\sigma
$ is the inverse demand function  that provides the maximum unitary price at which a total quantity $t$ can be sold at location $j$. 
 Furthermore, the firm  has a production capacity of  $ B_{ij} $ at location $j$, and its overall production must match total sales at all locations. Hence, the set of  feasible decisions of the firm  is  
$$
C_i := \left\{ (y_i, s_i) \in \mathbb{R}_+^{2J} \ \middle|\ \sum_{j=1}^J s_{ij} = \sum_{j=1}^J y_{ij}, \ y_{ij} \le B_{ij} \right\}.
$$  
Nash-Cournot equilibria of the problem coincide with the solutions to \eqref{VI} with the operator 
$$
G(x) = \begin{pmatrix}
    \vdots\\
    \nabla_{x^{(i)}}g_i(x^{(i)}, x^{(-i)})\\
    \vdots
\end{pmatrix}
$$  
over the set $C = \prod_{i=1}^N C_i$ (see, for instance \cite{FacchineiPang}). 
Since there could be multiple   equilibria, the maximization of the social welfare
$$f(x) := \sum_{i=1}^N g_i(x^{(i)}, x^{(-i)})$$ 
over the set of equilibria is performed to select one.}

{
In the tests, we considered a network where four firms compete over three locations, i.e. \textcolor{black}{$(N,J)=(4,3)$}. {Unitary costs have been set by} taking values between $0.1$ and $1$ with uniform probability. We set $\sigma = 1.05$, so that $G$ is monotone (see Section 4 in \cite{KaSh12}) and $f$ is convex. We chose $a_j = 1$, $b_j = 0.01$ and $B_{ij} = 5$ for all $i$ and $j$.}

{We ran Algorithm \ref{alg3} and two recent algorithms from the literature: Algorithm~3.1 in \cite{KaFa21} (a-IRG), that employs a Tikhonov penalty approach to solve problem  \eqref{OPVI}; Algorithm~1 in \cite{SaYu25} (IR-EG), that is an extragradient method  for nested variational inequalities. The latter can be exploited as well since the upper level of the equilibrium selection problem  can be recast as a variational inequality leveraging its first order optimality condition, due to the differentiability of  $f$.} 

{In Algorithm \ref{alg3} we set $\varepsilon = 10^{-6}$. All the three algorithms were stopped when the iterate $x_{k+1}$ satisfied both conditions $ | f(x_{k+1}) - f(x_k) | \le 10^{-3}$ and  $\psi^S(x_{k+1}) \le 10^{-2}$. The former provides a standard measure of the lack of meaningful progress in the optimization process, while the latter provides an estimate of the infeasibility of the iterate.}
{Also, a maximum runtime of \textcolor{black}{$300$} seconds was set, since using a higher {threshold} did not yield significantly different results. {Indeed, IR-EG stopped before reaching it in almost every instance, while} a-IRG failed to satisfy the stopping conditions even when the maximum runtime was  extended to \textcolor{black}{600} seconds, \textcolor{black}{ a behavior that  is consistent with the theoretical bound given in \cite[Corollary 3.5]{KaFa21}.}}

Table \ref{tab6} reports the average and maximum runtime, 
the average final inexactness $\tilde{\varepsilon}$, and the average social welfare over $100$ instances. 
All the algorithms found {solutions} %equilibria 
with similar values %in terms 
of the social welfare. 
Algorithm~\ref{alg3} and IR-EG consistently reached  the desired level of inexactness, with the former being  faster both in terms of average and maximum runtime. %In contrast,  
{On the contrary}, a-IRG never reached the required cut-off in the \textcolor{black}{$300$} seconds time frame, yielding a higher final  inexactness.

% \begin{table}[htbp!]
%     \centering
%     \begin{tabular}{cccccccc}
%     \toprule
%     Algorithm & Mean time (sec.)  & Max time (sec.)  & $\tilde{\varepsilon}$ &  Social welfare \\
%     \midrule[0.8 pt]
%     Alg \ref{alg3}&10.928  &  25.106&   0.008  & 17.096  \\
%     \midrule[0.8 pt]
    
%     a-IRG  & 60   & 60  &  0.148  & 17.123  \\
%     \midrule[0.8 pt]
%     IR-EG   & 42.786  & 60    & 0.01 & 17.101\\
%     \bottomrule 
%     \end{tabular}
%     \caption{ Performance  on equilibrium selection in Cournot competition.}
%     \label{tab6} 
% \end{table}

% \begin{table}[htbp!]
%     \centering
%     \begin{tabular}{cccccccc}
%     \toprule
%     Algorithm & Mean time (sec.)  & Max time (sec.)  & $\tilde{\varepsilon}$ &  Social welfare \\
%     \midrule[0.8 pt]
%     Alg \ref{alg3}&6.15  &  42.94&   0.008  & 17.096  \\
%     \midrule[0.8 pt]
    
%     a-IRG  & 60.00   & 60.00  &  0.140  & 17.123  \\
%     \midrule[0.8 pt]
%     IR-EG   & 55.52  & 60.00    & 0.011 & 17.101\\
%     \bottomrule 
%     \end{tabular}
%     \caption{ Performance  on equilibrium selection in Cournot competition.}
%     \label{tab6} 
% \end{table}

\begin{table}[htbp!]
    \centering
    \begin{tabular}{cccccccc}
    \toprule
    Algorithm & Mean time (sec.)  & Max time (sec.)  & $\tilde{\varepsilon}$ &  Social welfare \\
    \midrule[0.8 pt]
    Alg \ref{alg3}&6.15  &  42.94&   0.008  & 17.096  \\
    \midrule[0.8 pt]
    
    a-IRG  & 300.00   & 300.00  &  0.05  & 17.112  \\
    \midrule[0.8 pt]
    IR-EG   & 62.70  & 106.05    & 0.010 & 17.098\\
    \bottomrule 
    \end{tabular}
    \caption{ Performance  on equilibrium selection in Cournot competition~\textcolor{black}{$((N,J)=(4,3))$}}
    \label{tab6} 
\end{table}
\textcolor{black}{ We also compared  Algorithm \ref{alg3} and IR-EG on  instances of different sizes of the problem.  Table \ref{tab7} reports the performance of the two algorithms for networks with different  number of firms $(N)$ and  locations $(J)$ from Table \ref{tab6}.}
\textcolor{black}{Moreover, Figure \ref{fig3} provides the box plot for the runtime of the two algorithms corresponding to the results  of Tables \ref{tab6} and \ref{tab7}, where  the orange line indicates the median while  the green dotted line indicates the mean.}
% Requires: \usepackage{booktabs}
\begin{table}[htbp!]
\centering
\begin{tabular}{c c c c c c}
\toprule
&&&\multicolumn{3}{c}{Time (sec.)}\\
\cmidrule{4-6} Algorithm & $(N,J)$ &Solved \hspace{0.3cm}&\hspace{0.3cm} Mean \hspace{0.3cm} & \hspace{0.3cm} Median \hspace{0.3cm}  &\hspace{0.3cm} Max \hspace{0.3cm}  \\
\midrule
\multirow{4}{*}{Alg \ref{alg3}} & (5,4) & 100 & 16.00 & 10.08& 189.38 \\
\cmidrule{2-6}
& (6,5) & 100 & 32.29 &20.29& 217.34 \\
\cmidrule{2-6}
 & (7,5) & 98  & 60.24 &36.83& 300.00 \\
\midrule
\multirow{4}{*}{IR-EG}& (5,4) & 100 & 122.86 &120.82& 191.59 \\
\cmidrule{2-6}
 & (6,5) & 95  & 223.63 &225.58& 300.00 \\
 \cmidrule{2-6}
& (7,5) & 70  & 255.61 &262.90& 300.00 \\
\bottomrule

\end{tabular}
\caption{Performance comparison between Alg 3 and IR-EG.}
\label{tab7}
\end{table}

\textcolor{black}{
 Algorithm~\ref{alg3} and IR-EG consistently reached  the desired level of inexactness for the  cases $(5,4)$ and $(6,5)$, while IR-EG fails to solve a significant number of instances in the case $(7,5)$ within the time threshold. Although Algorithm \ref{alg3} is  considerably  faster than IR-EG in general, it exhibits
a meaningful difference between median and mean time, indicating a higher variance in the performance of the algorithm, while IR-EG does not.
 }

\begin{figure}[h]
    \centering
    \includegraphics[width=0.9\linewidth]{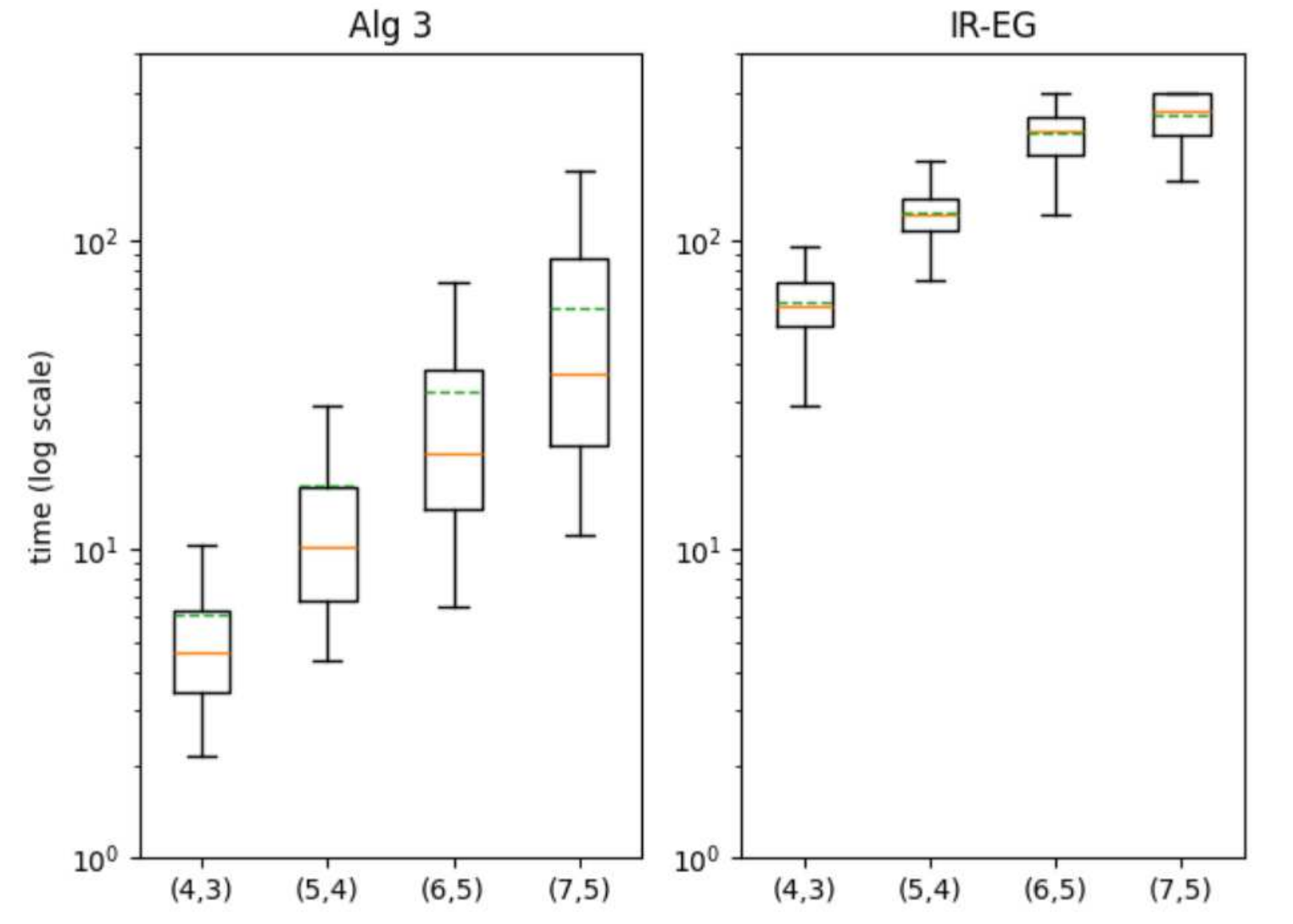}
    \caption{Box plot of the runtime of the two algorithms for different $(N,J)$.}
    \label{fig3}
\end{figure}

\section*{Conclusions}

We combined cutting planes techniques and exact penalization to study  simple  bilevel variational problems, wherein the upper level is an optimization problem and the lower level is a variational inequality. A degree of inexactness in the lower level was exploited to achieve exact penalization. Furthermore,  polyhedral approximations  were considered for the computations in the lower level while preserving exact penalization uniformly. Finally, suitable algorithms have been devised relying on these tools.

The paper aims at providing a basic setting for exploiting inexactness in  bilevel variational problems.
Further work can be carried out in some directions.

While the algorithms in the paper exploit a  degree of inexactness that is fixed at the beginning, adaptive rules to decrease it during the algorithm could be developed.
Since the exact penalty parameter depends on the inexactness, this could require
the integration with some kind of Tikhonov techniques.

The approach requires the solution of a penalized problem at each iteration, which becomes more and more computationally expensive as the number of cuts grows. While  an algorithm that  performs only some steps of a solution method  might lead to better performances, it brings in  additional difficulties in the identification of new cuts. 

\textcolor{black}{When the operator $G$ is not monotone, the approach of this paper would need to be extended by including suitable tools from non-convex and non-smooth optimization. In fact,  \eqref{OPVI} and \eqref{OPMVI} are no longer equivalent and the optimal value of the latter is only an upper bound for the former.  Moreover, the  reformulation of the   lower level with the Stampacchia gap function leads to  an optimization problem that  is non-convex and likely non-smooth. As a consequence, the inexactness alone is not enough to guarantee exact penalization since the existence of a Slater point does not yield (MFCQ) on the whole feasible region. Anyway, the extended version \eqref{eMFCQ}, that holds beyond the feasible region, is enough to guarantee exact penalization  in terms of both local and global minima  (see \cite{Ye12}). }

Finally, the techniques of this paper  could be possibly adapted for other types of bilevel variational problems, where the lower and/or  upper level are, for instance, Nash equilibrium problems with non-smooth data, generalized Nash equilibrium problems, or, more in general, quasi variational or quasi Ky Fan inequalities.\\

\textbf{Acknowledgements}

The authors are members of the Gruppo Nazionale per l’Analisi
Matematica, la Probabilit\`a e le loro Applicazioni (GNAMPA - National Group for Mathematical
Analysis, Probability and their Applications) of the Istituto Nazionale di Alta
Matematica (INdAM - National Institute of Higher Mathematics).\\

\textbf{Data availability}

No data sets were generated during the current study. The used python codes are available on GitHub at \url{https://github.com/RIKTOMA/OPVI} with access upon request.

% \begin{table}[htbp]
%     \centering
%     \begin{tabular}{cccccccc}
%     \toprule
%         $C$ &$\rho_0$&time (sec)& $\rho$ incr.&  cuts  &  $\tilde{\varepsilon}$ & $E$& $\tilde{\varepsilon}/E$\\
%     \midrule[0.8pt]
%     \multirow{4.2}{*}{Cube} &$10$ &4.0369&13.47&85.67&0.447&6.3246&0.0707\\
%     \cmidrule{2-8}
%      &$100$ &4.1181&1.63&92.1&0.4437&6.3246&0.0702\\
%     \cmidrule{2-8}
%      &$1000$ &4.4203&0.0&90.86&0.4203&6.3246&0.0665\\
%     \midrule
%     \multirow{4.2}{*}{Sphere}&$10$ &0.2079&19.74&9.31&0.0201&1.7889&0.0112\\
%    \cmidrule{2-8}
%     &$100$ 
%     &0.144&7.19&9.24&0.0199&1.7889&0.0111\\
%    \cmidrule{2-8}
%     &$1000$ &0.1313&0.0&9.18&0.02&1.7889&0.0112\\
%     \midrule
%     \multirow{4.2}{*}{Simplex}&$10$ &0.3781&14.69&35.69&0.3144&1.2649&0.2485\\
%     \cmidrule{2-8}
%     &$100$ &0.3455&3.42&34.66&0.3097&1.2649&0.2448\\
%     \cmidrule{2-8}
%     &$1000$ &0.4871&0.0&36.83&0.3027&1.2649&0.2393\\
%     \bottomrule
%     \end{tabular}
%     \caption{ Results for different  $\rho_0$, $n=50$, $l=10$, $ \|b\|=10$, $L=20$, $\varepsilon=0.01$.}
% \end{table}

\bibliographystyle{elsarticle-num} 
\bibliography{bibliography_revised}

\end{document}